\newcommand{\R}{\mathbb{R}}
\renewcommand{\phi}{\varphi}
\DeclareMathOperator*{\argmin}{arg\,min}
\DeclareMathOperator{\sgn}{sgn}
\newcommand{\prox}[3][]{\operatorname{prox}^{#1}_{#2}\left(#3 \right)}
\theoremstyle{plain}
\newtheorem{theorem}{Theorem}[section]
\newtheorem{corollary}{Corollary}[section]
\newtheorem{lemma}{Lemma}[section]
\theoremstyle{definition}
\newtheorem{definition}{Definition}[section]
\theoremstyle{remark}
\newtheorem{remark}{Remark}[section]
\newcommand*\diff{\mathop{}\!\mathrm{d}}
\newcommand{\dist}{\textup{dist}}
\newcommand{\cO}{\mathcal{O}}
\newcommand{\tcO}{\widetilde{\mathcal{O}}}
\newcommand{\eps}{\epsilon}
\title{Variable Smoothing for Weakly Convex Composite Functions}
\author{Axel B\"ohm\footnote{Faculty of Mathematics, University of Vienna, Oskar-Morgenstern-Platz 1, 1090 Vienna, Austria. e-mail: axel.boehm@univie.ac.at. Research supported by the doctoral programme \textit{Vienna Graduate School on Computational Optimization (VGSCO)},
    FWF (Austrian Science Fund), project W 1260.} \and Stephen J. Wright\footnote{Computer Sciences Department and Wisconsin Institute for Discovery, University of Wisconsin-Madison. e-mail: swright@cs.wisc.edu. Research supported by  NSF Awards 1628384, 1634597, and 1740707; Subcontract 8F-30039 from Argonne National Laboratory; and Award N660011824020 from the DARPA Lagrange Program.}}
\date{\today}
\begin{document}

\maketitle
\begin{abstract}%
  We study minimization of a structured objective function, being the
  sum of a smooth function and a composition of a weakly convex
  function with a linear operator. Applications include image
  reconstruction problems with regularizers that introduce less bias
  than the standard convex regularizers. We develop a variable
  smoothing algorithm, based on the Moreau envelope with a decreasing
  sequence of smoothing parameters, and prove a complexity of
  $\cO(\eps^{-3})$ to achieve an $\eps$-approximate solution. This
  bound interpolates between the $\cO(\eps^{-2})$ bound for the smooth
  case and the $\cO(\eps^{-4})$ bound for the subgradient method. Our
  complexity bound is in line with other works that deal with
  structured nonsmoothness of weakly convex functions.
\end{abstract}

\section{Introduction}%
\label{sec:introduction}
We study minimization of the sum of a smooth function and a
nonsmooth, weakly convex function composed with a linear
operator. The case in which the nonsmooth regularizer is convex has
been studied extensively; see~\cite{pdhg,ROF-TV-denoising}. Weakly
convex functions (which can be expressed as the difference between a
convex function and a quadratic) share some properties with convex
functions but include many interesting nonconvex cases, as we
discuss in Section~\ref{sub:weakly_convex_regularizers}. For
example, any smooth function with a uniformly Lipschitz continuous
gradient is a weakly convex function.

Our approach makes use of a smooth approximation of the
weakly convex function known as the \emph{Moreau envelope},
parametrized by a positive scalar $\mu$.
Since evaluation of the gradient of the Moreau envelope is obtained
by applying a proximal operator to the function, our method is
suitable for problems where this proximal operator can be evaluated
at reasonable cost. Our method requires $\mathcal{O}(\epsilon^{-3})$
iterations to obtain an $\epsilon$-approximate stationary
point.

The remainder of the paper is organized as follows.
Section~\ref{sec:related-problems} is concerned with other problem
formulations related to ours and describes specific problems with
weakly convex regularizers. In Section~\ref{sec:prelim} we give the
necessary mathematical preliminaries including a detailed discussion
about the notion of stationarity we use.  Section~\ref{sec:main}
describes our approach and its convergence properties. In
Section~\ref{sec:prox_grad} we highlight the difference between the
variable smoothing technique and a simple proximal-gradient
approach, for the case in which the linear operator is not present
in the weakly smooth term.

\section{Problem Class and Algorithmic Approach}%
\label{sec:related-problems}

The problem we address in this paper has the form
\begin{equation}
  \label{eq:hg}
  \min_{x \in \R^d} \, F(x) := h(x) + g(Ax),
\end{equation}
for a smooth function $h:\R^d \to \R$, a weakly convex function $g:\R^n
\to \R$ (generally nonsmooth) and a matrix $A\in\R^{n \times d}$.
For some $\rho \ge 0$, we say that
\begin{equation} \label{eq:weakly_convex}
 \mbox{$g:\R^n \to \overline{\R}$ is \em{$\rho$-weakly convex} if} \;\; g
   + \frac{\rho}{2} \lVert \cdot \rVert^2 \; \mbox{ is convex.}
\end{equation}
When $g$ is a smooth function with a uniformly Lipschitz continuous
gradient, with Lipschitz constant $L$, then $g$ is weakly
  convex with $\rho=L$. Other interesting weakly convex functions are
discussed in Section~\ref{sub:weakly_convex_regularizers}.

The \emph{Moreau envelope} $g_{\mu}$ is a smooth approximation of $g$,
parametrized by a positive scalar $\mu$.  The Moreau envelope and
the closely related proximal operator are defined as follows.
\begin{definition}%
  \label{def:moreau_envelope}
  For a proper, $\rho$-weakly convex and lower semicontinuous function
  $g: \R^n \rightarrow \overline{\R}$,  the Moreau envelope of $g$ with the
  parameter $\mu \in (0,\rho^{-1}[$ is the function from $\R^n$
  to $\R$ defined by
  \begin{equation} \label{eq:moreau}
    g_{\mu}(y) := \inf_{z \in \R^n} \left\{ g(z) +
    \frac{1}{2\mu}\lVert z-y \rVert^2\right\}.
  \end{equation}
  The proximal operator of the function $\mu g$ is the $\argmin$ of
  the right-hand side in this definition, that is,
  \[
    \prox{\mu g}{y} := \argmin_{z \in \R^n}\left\{ g(z) +
    \frac{1}{2\mu}\lVert z-y \rVert^2\right\} =
    \argmin_{z \in \R^n}\left\{ \mu g(z) +
    \frac{1}{2}\lVert z-y \rVert^2\right\}.
    \]
\end{definition}
Note that $\prox{\mu g}{y}$ is defined \emph{uniquely} by this
formula, because the function being minimized is strongly convex.
We describe in Lemma~\ref{lem:grad_smooth_is_prox} below the
relationship between $\nabla g_{\mu}(y)$ and $\prox{\mu g}{y}$,
which is key to our algorithm.

Steps of our algorithm have the form
\begin{equation}
  x \leftarrow x - \gamma \nabla (h + g_{\mu} \circ A)(x),
\end{equation}
for some steplength $\gamma$. Accelerated versions of these approaches
have been proposed for convex problems
in~\cite{me-variable-smoothing,cevher-smoothing-acc,bot15variable-smoothing}.
The use of acceleration makes the analysis more complicated than for
the gradient case; see~\cite{fista,chambolle_dossal}.

\subsection{Composite Problems}

We discuss several instances of problems of the form~\eqref{eq:hg}.

\paragraph{Regularization with $\Vert \cdot \Vert_1$ (LASSO).}
Functions that are ``sharp'' around zero have a long history as
regularizers that induce sparsity in the solution vector
$x$. Foremost among such functions is the vector norm $\Vert \cdot
\Vert_1$, which is used for example in sparse least-squares
regression (also known as LASSO~\cite{Tib96}):
\begin{equation} \label{eq:lasso}
  \min_x \, \frac12 \Vert Bx - b \Vert^2 + \Vert x \Vert_1.
\end{equation}
This formulation is convex and forms a special case of~\eqref{eq:hg}
in which $A$ is given by the identity. Regularization with the norm
$\Vert \cdot \Vert_1$ is used also in logistic
regression~\cite{ShiWWLKK06}.

\paragraph{Other Convex Regularizers.}
The case of problems~\eqref{eq:hg} in which $g$ is nonsmooth and
\textit{convex} (with possible smooth and/or nonsmooth additive terms)
has received a great deal of attention in the literature on convex
optimization and applications; see for
example~\cite{cevher-smoothing-acc,pdhg,vu,bot2015primaldualgap,bot2014primaldualstrong}.
The most notable applications are found in inverse problems involving
images. In particular, discrete (an)isotropic \emph{Total Variation
  (TV)} denoising has the form
\begin{equation} \label{eq:tv.dn}
  \min_x \, \frac12 \Vert x-b \Vert^2 + \Vert \nabla x \Vert_1,
\end{equation}
where $b$ is the observed (noisy) image and $\nabla$ denotes the
discretized gradient in two or three dimensions. TV deblurring
problems have the form
\begin{equation} \label{eq:tv.db}
  \min_x \, \frac12 \Vert Bx-b \Vert^2 + \Vert \nabla x \Vert_1,
\end{equation}
where $B$ is the blurring operator; see~\cite{pdhg,spdhg}.

Other examples of convex problems of the form~\eqref{eq:hg} include
generalized convex feasiblity~\cite{cevher-smoothing-acc} and support
vector machine classification~\cite{bot15variable-smoothing}. A
typical formulation of the latter problem has $h(x) = (\lambda/2)
\|x\|^2$ and $g(Ax) = \sum_{i=1}^n \phi(y_i a_i^T x)$, where $\phi(s) =
\max\{-s,0\}$ is the hinge loss and the rows of $A$ are $y_i a_i^T$,
$i=1,2,\dotsc,n$, where $(y_i,a_i) \in \{-1,1\} \times \R^d$ are the
training points and their labels.

\paragraph{Weakly Convex Regularizers.}%
\label{sub:weakly_convex_regularizers}

The use of the $\ell_1$ regularizer in \eqref{eq:lasso} tends
to depress the magnitude of nonzero elements of the solution,
resulting in {\em bias}. This phenomenon is a consequence of the fact
that the proximal operator of the $1$-norm, often called the
\emph{soft thresholding operator}, does not approach the identity
even for large arguments. For this reason, nonconvex
alternatives to $\Vert \cdot \Vert_1$ are often used to reduce bias.
These include $\ell_p$-norms (with $0<p<1$) which are not weakly
convex, and the several weakly convex regularizers, which we now
describe.
The \textit{minimax concave penalty (MCP)}, introduced in~\cite{mcp}
and used in~\cite{log-regression-mcp,calculus-KL}, is a family of
functions $r_{\lambda,\theta}:\R\to\R_+$ involving two positive
parameters $\lambda$ and $\theta$, and defined by
\begin{equation}
  \label{eq:mcp}
  r_{\lambda,\theta}(x) :=
  \begin{cases}
    \lambda\vert x \vert - \frac{x^2}{2 \theta},   &   \vert x \vert \le \theta\lambda,\\
    \frac{\theta \lambda^2}{2},            &   \text{otherwise}.
  \end{cases}
\end{equation}
(Note that this function satisfies the definition of $\rho$-weak
convexity with $\rho = \theta^{-1}$.) The proximal operator of this
function (called \emph{firm threshold} in~\cite{sparse-signal-weak})
can be written in the following closed form when $\theta>\gamma$:
\begin{equation}
  \prox{\gamma r_{\lambda,\theta}}{x} =
  \begin{cases}
    0,                           & \vert x \vert < \gamma \lambda, \\
    \frac{x- \lambda \gamma \sgn(x)}{ 1-({\gamma}/{\theta})}, & \gamma \lambda \le \vert x \vert \le \theta \lambda, \\
    x ,                           & \vert x \vert > \theta \lambda .
  \end{cases}
\end{equation}
The \textit{fractional penalty function}
(cf.~\cite{weaklyconvex_regularizer,calculus-KL}) $\phi_a:\R\to\R_+$
(for parameter $a>0$) is
\begin{equation}
  \phi_a(x) := \frac{\vert x \vert}{1 + a \vert x \vert /2 }.
\end{equation}
The \textit{smoothly clipped absolute deviation (SCAD)}~\cite{scad}
(cf.~\cite{calculus-KL}) is defined for parameters $\lambda>0$ and
$\theta>2$ as follows:
\begin{equation}
  \label{eq:scad}
  r_{\lambda,\theta}(x) =
  \begin{cases}
    \lambda\vert x \vert ,                                &   \vert x \vert \le \lambda, \\
    \frac{-x^2+2\theta\lambda \vert x \vert - \lambda^2}{2(\theta-1)},      &   \lambda < \vert x \vert \le \theta \lambda, \\
    \frac{(\theta+1)\lambda^2}{2},                      &  \vert x \vert > \theta \lambda.
  \end{cases}
\end{equation}
(This function is ${(\theta-1)}^{-1}$-weakly convex.)

Since these functions approach (or attain) a finite value as their
argument grows in magnitude, they do not introduce as much bias in the
solution as does the $\ell_1$ norm, and their proximal operators
approach the identity for large arguments.

The regularizers of this section, and the convex $\Vert \cdot \Vert$
regularizer, have been used mostly in the simple additive setting
\begin{equation}
  \label{eq:weakly_add}
  \min_{x \in \R^d} \, h(x) + g(x)
\end{equation}
for a smooth data fidelity term $h$ and nonsmooth regularizer $g$, for
example in least squares or logistic
regression~\cite{log-regression-mcp} and compressed sensing
(cf.~\cite{sparse-signal-weak}).

\paragraph{Weakly Convex Composite Losses.}%
\label{sub:weakly_convex_losses}

The use of weakly convex functions composed with linear operators has
been explored in the robust statistics literature.  An early instance
is the {\em Tukey biweight} function~\cite{BeaT74}, in which $g(Ax)$
has the form
\begin{equation} \label{eq:tukey}
  g(Ax) = \sum_{i=1}^n \phi(A_{i\cdot}x-b_i), \quad
  \mbox{where} \; \phi(\theta) = \frac{\theta^2}{1+\theta^2},
\end{equation}
where $A_{i\cdot}$ denotes the $i$-th row of $A$.
This function behaves like the usual least-squares loss when
$\theta^2 \ll 1$ but asymptotes at $1$. It is $\rho$-weakly convex
with $\rho=6$.

A slightly different definition of the Tukey biweight function appears
in~\cite[Section~2.1]{Loh17a}. This same reference also mentions
another nonconvex loss function, the {\em Cauchy loss}, which has the
form~\eqref{eq:tukey} except that $\phi$ is defined by
\[
  \phi(\theta) = \frac{\xi^2}{2} \log \left( 1+ \frac{\theta^2}{\xi^2} \right),
\]
for some parameter $\xi$. This function is $\rho$-weakly convex with
$\rho=6$.

\subsection{Complexity Bounds for Weakly Convex Problems}%
\label{sub:weakly_convex_rates}

To put our results in perspective, we provide a review of the
literature on complexity bounds for optimization problems related to
our formulation~\eqref{eq:hg}, including weakly convex functions. In
all cases, these are bounds on the number of iterations required to
find an approximately stationary point, where our measure of
stationarity is based the norm of the gradient of the Moreau envelope
(a smooth proxy).

The best known complexity for black box subgradient optimization for
weakly convex functions is $\cO(\epsilon^{-4})$. This result is proved
for \emph{stochastic} subgradient in~\cite{stoch-weak-sub-k-4}, but as
in the convex case, there is no known improvement in the deterministic
setting.
As in convex optimization, subgradient methods are quite general and
implementable for weakly convex functions. However, when more
structure is present in the function, algorithms that achieve better
complexity can be devised. In particular, when the proximal operator
of the nonsmooth weakly convex function can be calculated
analytically, complexity bounds of $\cO(\epsilon^{-2})$ can be proven
(see Section~\ref{sec:prox_grad}), the same bounds as for steepest
descent methods in the smooth nonconvex case. This means that the
entire difficulty introduced by the nonsmoothness can be mitigated as
long as the nonsmoothness can be  treated by a proximal operator.

For convex optimization problems, bounds of $\cO(\epsilon^{-1})$ can
be obtained for gradient methods on smooth functions and
$\cO(\epsilon^{-1/2})$ for accelerated gradient methods.  These same
bounds can also be obtained for nonsmooth problems provided that the
nonsmooth term is handled by a proximal operator. When the
explicit proximal operator is not available and subgradient methods
have to be used, the complexity reverts to $\cO(\epsilon^{-2})$.

It is possible to keep the $\cO(\epsilon^{-2})$ rate when just a local
model of the weakly convex part is evaluated by a convex operator. The
paper~\cite{paquette_composition_k-2} studies optimization
problems of the type
\begin{equation} \label{eq:hc}
  \min_x \, h(x) + g(c(x))
\end{equation}
where $h$ is convex, proper, and closed; $g$ is convex and Lipschitz
continuous; and $c$ is smooth. (Under these assumptions, the
composition $g\circ c$ is weakly convex.) The  $\cO(\epsilon^{-2})$ bound is
proved for an algorithm in which the (convex) subproblem
\begin{equation}
  \label{eq:subproblem}
  \min_y \, h(y) + g(c(x) + \nabla c(x)(y-x)) + \frac{1}{2 t} \Vert y-x \Vert^2
\end{equation}
is solved explicitly.  In the more realistic case in
which~\eqref{eq:subproblem} must be solved by an iterative procedure,
a bound of $\tcO(\epsilon^{-3})$ is obtained
in~\cite{paquette_composition_k-2}.  (The symbol $\tcO$ hides
logarithmic terms.)

Functions of the form $g(c(x))$ have also been studied
in~\cite{lewis2016proximal} for the case of a smooth nonlinear vector
function $c$ and a prox-regular $g$. This formulation is more general
than those considered in this paper, both in the fact
that all weakly convex functions are prox-regular, and in the
nonlinearity of the inner map $c$. The subproblems
in~\cite{lewis2016proximal} have a form similar
to~\eqref{eq:subproblem}, and while convergence results are proved in
the latter paper, it does not contain rate-of-convergence results or
complexity results.

A different weakly convex structure is explored
in~\cite{smooth-minimax}, in which the weak convexity stems from a
smooth saddle point problem. This paper studies the problem
\begin{equation}
  \min_x \, \max_{y\in Y} \, l(x,y),
\end{equation}
for a compact set $Y \subset \R^m$, where $l(x,\cdot)$ is concave,
$l(\cdot,y)$ is nonconvex, and $l(\cdot,\cdot)$ is smooth.  An
iteration bound of $\tcO(\epsilon^{-3})$ is proved for a method that
uses only gradient evaluations.

In light of the considerations above, the complexity bound of
$\cO(\epsilon^{-3})$ for our algorithm seems almost inevitable. It
interpolates between the setting without structural assumptions about
the nonsmoothness (black box subgradient) and the perfect structural
knowledge of the nonsmoothness (explicit knowledge of the proximal
operator).

In Section~\ref{sec:prox_grad}, we treat the simpler setting in which
the linear operator from~\eqref{eq:hg} is the identity, so that
$F(x) = h(x)+g(x)$. Similar problems have been analyzed before, for
example, in~\cite{log-regression-mcp,sparse-signal-weak}. However, it
is assumed in~\cite{sparse-signal-weak} that convexity in the data
fidelity term $h$ compensates for nonconvexity in the regularizer $g$
such that the overall objective function $F$ remains convex. (We make
no such assumption here.) The paper~\cite{log-regression-mcp} does not
make such restrictive assumptions and proves convergence but not
complexity bounds.

\section{Preliminaries}%
\label{sec:prelim}

The concept of subgradient of a convex function can be adapted to
weakly convex functions via the following definition.
\begin{definition}[Fr\'echet subdifferential]%
  \label{def:frechet_subdifferential}
  Let $g:\R^n \to \overline{\R}$ be a function and $\bar{y}$ a point
  such that $g(\bar{y})$ is finite. Then, the \emph{Fr\'echet
    subdifferential} of $g$ at $\bar{y}$, denoted by $\partial
  g(\bar{y})$, is the set of all vectors $v \in \R^n$ such that
  \begin{equation}
    \label{eq:subgradient}
    g(y) \ge g(\bar{y}) + \langle v, y- \bar{y}\rangle + o(\Vert
    y-\bar{y} \Vert) \quad \text{as $y \to \bar{y}$}.
  \end{equation}
\end{definition}
Modifying the convex case, in which subgradients are the slopes of
linear functions that underestimate $g$ but coincide with it at
$\bar{y}$, Fr\'echet subgradients do so \emph{up to first order}. This
definition makes sense for arbitrary functions, but for lower
semicontinuous $\rho$-weakly convex functions, more can be said. For
example, for this class of function we know that subgradients satisfy
the following stronger version of~\eqref{eq:subgradient}, for all $v
\in \partial g(\bar{y})$,
\begin{equation}
  g(y) \ge g(\bar{y}) + \langle v, y - \bar{y} \rangle - \frac{\rho}{2} \Vert y-\bar{y} \Vert^2, \quad \forall y \in \R^{n}.
\end{equation}
Further, if we assume the weakly convex function to be continuous at a
point $y$, then its subdifferential is nonempty at $y$. Both of these
claims can be verified directly by adding
$\frac{\rho}{2}\Vert \cdot \Vert^{2}$ to $g$ and considering the
convex subdifferential; see~\cite[Lemma~2.1]{stochastic-model-based}.

Another nice property of weakly convex functions is that the
definition of a Moreau envelope (see
Definition~\ref{def:moreau_envelope}) extends without modification to
weakly convex functions, subject only to a restriction on the
parameter $\mu$. The proximal operator of
Definition~\ref{def:moreau_envelope} also extends to this setting, and
this operator and the Moreau envelope fulfil the same identity as in
the convex setting.
\begin{lemma}[{\cite[Corollary 3.4]{moreau-weak-properties}}]%
  \label{lem:grad_smooth_is_prox}
  Let $g: \R^n \to \overline{\R}$ be a proper, $\rho$-weakly convex,
  and lower semicontinuous function, and let $\mu \in (0,\rho^{-1})$.
  Then the Moreau envelope $g_\mu(\cdot)$ is continuously differentiable
  on $\R^n$ with gradient
  \begin{equation}
    \nabla g_\mu(y) = \frac{1}{\mu}\left(y - \prox{\mu g}{y}\right), \quad \mbox{for all $y \in \R^n$}.
  \end{equation}
  This gradient is
  $\max\left\{\mu^{-1}, \frac{\rho}{1-\rho\mu} \right\}$-Lipschitz
  continuous. In particular, 
  a gradient step with respect to the Moreau envelope corresponds to a
  proximal step, that is,
  \begin{equation} \label{eq:ys9}
    y - \mu \nabla g_\mu(y) = \prox{\mu g}{y}, \quad \mbox{for all $y \in \R^n$}.
  \end{equation}
\end{lemma}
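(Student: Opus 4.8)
The plan is to reduce everything to the well-understood convex case by exploiting weak convexity. Set $\tilde g := g + \tfrac{\rho}{2}\lVert\cdot\rVert^2$, which by \eqref{eq:weakly_convex} is proper, lower semicontinuous, and \emph{convex}. Writing $\beta := 1 - \rho\mu \in (0,1)$ (using $\mu \in (0,\rho^{-1})$), I would complete the square inside the defining infimum \eqref{eq:moreau}: substituting $g = \tilde g - \tfrac{\rho}{2}\lVert\cdot\rVert^2$ and collecting the quadratic terms in $z$ turns the objective into $\tilde g(z) + \tfrac{\beta}{2\mu}\lVert z - y/\beta\rVert^2$ plus a term independent of $z$. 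This yields two facts at once: the proximal identity
\begin{equation}
  \prox{\mu g}{y} = \prox{(\mu/\beta)\tilde g}{y/\beta},
\end{equation}
which shows $\prox{\mu g}{y}$ is well defined and unique (inherited from the convex prox), and the envelope identity
\begin{equation}
  g_\mu(y) = \tilde g_{\mu/\beta}(y/\beta) - \frac{\rho}{2\beta}\lVert y \rVert^2.
\end{equation}

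From here differentiability is immediate. The Moreau envelope $\tilde g_{\mu/\beta}$ of the convex function $\tilde g$ is continuously differentiable with gradient $\tfrac{\beta}{\mu}\!\left(\cdot - \prox{(\mu/\beta)\tilde g}{\cdot}\right)$, so $g_\mu$ is $C^1$ as the sum of a smooth quadratic and the composition of this gradient with the linear map $y \mapsto y/\beta$. Differentiating the envelope identity and inserting the proximal identity, the factors of $\beta$ cancel cleanly and collapse to the claimed $\nabla g_\mu(y) = \tfrac{1}{\mu}\!\left(y - \prox{\mu g}{y}\right)$; equation \eqref{eq:ys9} is then just a rearrangement.

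The only genuinely delicate part is the \emph{sharp} Lipschitz constant. I would adopt the resolvent viewpoint: $p := \prox{\mu g}{y}$ satisfies the optimality condition $y - \beta p \in \mu\,\partial\tilde g(p)$, so for two points $y_1,y_2$ with images $p_1,p_2$, monotonicity of $\partial\tilde g$ gives $\langle y_1 - y_2, p_1 - p_2\rangle \ge \beta\lVert p_1 - p_2\rVert^2$. Writing $u := y_1 - y_2$ and $\Delta := \nabla g_\mu(y_1) - \nabla g_\mu(y_2) = \tfrac{1}{\mu}\!\left(u - (p_1-p_2)\right)$, I would substitute $p_1 - p_2 = u - \mu\Delta$ into this inequality; after using $1-\beta = \rho\mu$ and dividing by $\mu$, it becomes $\rho\lVert u\rVert^2 + (2\beta-1)\langle u,\Delta\rangle - \beta\mu\lVert\Delta\rVert^2 \ge 0$. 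Bounding $(2\beta-1)\langle u,\Delta\rangle \le \lvert 2\beta-1\rvert\,\lVert u\rVert\,\lVert\Delta\rVert$ by Cauchy--Schwarz reduces this to a single quadratic inequality in $\lVert\Delta\rVert/\lVert u\rVert$.

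The technical crux is that the discriminant of this quadratic simplifies exactly to $(2\beta-1)^2 + 4\beta\mu\rho = 1$, whence $\lVert\Delta\rVert \le \tfrac{1 + \lvert 2\beta-1\rvert}{2\beta\mu}\lVert u\rVert = \max\{\mu^{-1},\,\rho/(1-\rho\mu)\}\lVert u\rVert$, the two terms in the maximum corresponding to the regimes $\mu \lessgtr \tfrac{1}{2\rho}$ (the sign of $2\beta-1$). I would flag this as the main obstacle: a naive triangle inequality applied to $\tfrac{1}{\beta}\nabla\tilde g_{\mu/\beta}(y/\beta) - \tfrac{\rho}{\beta}y$ overshoots the constant, and only by using the $\beta$-strong monotonicity (equivalently, the cocoercivity of the convex envelope gradient) to exploit the correlation between the two terms does one recover the tight value.
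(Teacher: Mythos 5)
Your proof is correct. Note that the paper itself does not prove this lemma at all---it imports it by citation from the reference given in the lemma's header---so your argument is a genuinely self-contained derivation rather than a variant of an in-paper proof. The reduction to the convex case is sound: completing the square with $\tilde g = g + \tfrac{\rho}{2}\lVert\cdot\rVert^2$ and $\beta = 1-\rho\mu$ does give $\prox{\mu g}{y} = \prox{(\mu/\beta)\tilde g}{y/\beta}$ and $g_\mu(y) = \tilde g_{\mu/\beta}(y/\beta) - \tfrac{\rho}{2\beta}\lVert y\rVert^2$ (the constant term works out to $\tfrac{1}{2\mu}(1-\beta^{-1})\lVert y\rVert^2 = -\tfrac{\rho}{2\beta}\lVert y\rVert^2$), and the chain-rule computation does collapse to $\nabla g_\mu(y) = \tfrac{1}{\mu}(y - \prox{\mu g}{y})$ since $\tfrac{1}{\mu\beta} - \tfrac{\rho}{\beta} = \tfrac{1}{\mu}$. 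The Lipschitz argument is the delicate part and you have it right: the optimality condition $y - \beta p \in \mu\,\partial\tilde g(p)$ plus monotonicity gives $\langle u, p_1-p_2\rangle \ge \beta\lVert p_1-p_2\rVert^2$, the substitution $p_1-p_2 = u - \mu\Delta$ yields $\rho\lVert u\rVert^2 + (2\beta-1)\langle u,\Delta\rangle - \beta\mu\lVert\Delta\rVert^2 \ge 0$, and the discriminant $(2\beta-1)^2 + 4\beta(1-\beta) = 1$ exactly, so the larger root is $\tfrac{1+\lvert 2\beta-1\rvert}{2\beta\mu}$, which equals $\mu^{-1}$ when $\beta \ge 1/2$ and $\rho/(1-\rho\mu)$ when $\beta < 1/2$---precisely the stated maximum. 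Your closing remark is also apt: a triangle-inequality bound on $\tfrac{1}{\beta}\nabla\tilde g_{\mu/\beta}(y/\beta) - \tfrac{\rho}{\beta}y$ would give only the looser constant $\tfrac{1}{\mu\beta} + \tfrac{\rho}{\beta}$, so the monotonicity-based quadratic inequality is genuinely needed for the sharp constant claimed in the lemma.
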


Lemma~\ref{lem:grad_smooth_is_prox} not only clarifies the smoothness
of the Moreau envelope, but also gives a way of computing its gradient
via the prox operator. Obviously, a smooth representation whose
gradient could not be computed would be of only limited usefulness
from an algorithmic standpoint. The only difference between the weakly
convex and convex settings is that the Moreau envelope need not be
convex in the former case.

\subsection{Stationarity}%
\label{sub:stationarity}

We say that a point $\bar{x}$ is a stationary point for a function if
the Fr\'echet subdifferential of the function contains $0$ at
$\bar{x}$. The concept of \emph{nearly stationary} is not quite so
straightforward. We motivate our approach by looking first at the
simple additive composite problem, also discussed in
Section~\ref{sec:prox_grad}, which corresponds to setting $A=I$
in~\eqref{eq:hg}, that is,
\begin{equation} \label{eq:hg.add}
  \min_x \, h(x) + g(x).
\end{equation}
Stationarity for~\eqref{eq:hg.add} means that
$ 0 \in \partial( h + g)(\bar{x})$, that is,
$ -\nabla h(\bar{x}) \in \partial g(\bar{x})$.  A natural definition
for $\epsilon$-approximate stationarity would thus be
\begin{equation} \label{eq:near_stat_1}
  \dist (-\nabla h(x), \partial g(x)) \le \epsilon,
\end{equation}
where $\dist$ denotes the distance between two sets and is given
  for a point $x\in\R^d$ and a set ${\cal A}\subset\R^d$ by
  $\dist(x,{\cal A}) := \inf_{y \in {\cal A}} \, \{\Vert x-y \Vert\}$. However, since we are running
gradient descent on the \emph{smoothed} problem, our algorithm will naturally
compute and detect points with that satisfy a threshold condition of the form
\begin{equation}
  \label{eq:algo_near_stationary}
  \lVert \nabla h (x) + \nabla g_\mu(x) \rVert \le \epsilon.
\end{equation}
The next lemma helps to clarify relationship between these two
conditions.
\begin{lemma}%
  \label{lem:grad-moreau-in-partial-prox}
  Let $g: \R^n \to \overline{\R}$ be a proper, $\rho$-weakly convex,
  and lower semicontinuous function; and let $\mu \in (0,\rho^{-1})$.
  Then
 \begin{equation}
    \label{eq:2}
    \nabla g_\mu(x) \in \partial g (\prox{\mu g}{x}).
  \end{equation}
\end{lemma}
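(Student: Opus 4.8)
The plan is to start from the characterization of the proximal operator as the minimizer in the definition of the Moreau envelope, and then write down the optimality (stationarity) condition for that minimization. Concretely, let $z := \prox{\mu g}{x}$, so that $z$ minimizes $w \mapsto g(w) + \frac{1}{2\mu}\lVert w - x \rVert^2$ over $\R^n$. Because $g$ is $\rho$-weakly convex and $\mu \in (0,\rho^{-1})$, this objective is strongly convex, so the minimizer is unique and the first-order optimality condition is both necessary and sufficient.

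First I would add $\frac{\rho}{2}\lVert \cdot \rVert^2$ to $g$ to pass to a genuinely convex function $\tilde g := g + \frac{\rho}{2}\lVert \cdot \rVert^2$, so that the Fr\'echet subdifferential of $g$ coincides with the usual convex subdifferential shifted by $-\rho z$, i.e. $\partial g(z) = \partial \tilde g(z) - \rho z$. Alternatively, and perhaps more cleanly, I would invoke the sum rule for the subdifferential of $g(w) + \frac{1}{2\mu}\lVert w - x \rVert^2$: since the quadratic term is smooth, the subdifferential of the sum at $z$ is $\partial g(z) + \frac{1}{\mu}(z - x)$. The optimality condition for the (strongly convex) minimization is that $0$ lies in this set, which rearranges to
\begin{equation}
  \frac{1}{\mu}(x - z) \in \partial g(z).
\end{equation}

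The final step is to recognize the left-hand side. By Lemma~\ref{lem:grad_smooth_is_prox}, we have $\nabla g_\mu(x) = \frac{1}{\mu}(x - \prox{\mu g}{x}) = \frac{1}{\mu}(x - z)$. Substituting $z = \prox{\mu g}{x}$ into the inclusion above yields exactly $\nabla g_\mu(x) \in \partial g(\prox{\mu g}{x})$, which is the claim~\eqref{eq:2}.

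The main obstacle I anticipate is justifying the use of the subdifferential sum rule and the optimality condition in the \emph{weakly} convex (nonconvex) setting, since the Fr\'echet subdifferential does not automatically obey a sum rule for arbitrary functions. This is precisely where $\rho$-weak convexity of $g$ together with the restriction $\mu < \rho^{-1}$ earns its keep: after adding $\frac{1}{2\mu}\lVert \cdot \rVert^2$, the objective becomes convex (indeed $\left(\frac{1}{\mu} - \rho\right)$-strongly convex), so the Fr\'echet subdifferential agrees with the convex subdifferential, Fermat's rule ($0 \in \partial(\cdot)$ at a minimizer) applies exactly, and the smooth-plus-nonsmooth sum rule is valid. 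I would therefore phrase the argument in terms of the convexified function $\tilde g$ and carefully translate the resulting convex optimality condition back to a statement about $\partial g(z)$, checking that the $\rho z$ terms cancel correctly.
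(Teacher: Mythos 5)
Your argument is correct and is essentially the paper's own proof: both write the first-order optimality condition $0 \in \partial g(\prox{\mu g}{x}) + \frac{1}{\mu}(\prox{\mu g}{x} - x)$ for the minimization defining the proximal operator and then identify $\frac{1}{\mu}(x - \prox{\mu g}{x})$ with $\nabla g_\mu(x)$ via~\eqref{eq:ys9}. Your additional care in justifying Fermat's rule and the sum rule by passing to the convexified function $g + \frac{1}{2\mu}\lVert \cdot \rVert^2$ is a welcome elaboration of a step the paper leaves implicit, but it does not change the route.
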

\begin{proof}
  From Definition~\ref{def:moreau_envelope}, we have that
  \[
    0 \in \partial g(\prox{\mu g}{x}) + \frac{1}{\mu} (\prox{\mu g}{x} - x),
  \]
  from which the result follows when we use~\eqref{eq:ys9}.
\end{proof}
(This result is proved for the case of $g$ convex in~\cite[Lemma
2.1]{paquette_composition_k-2}, with essentially the same proof.)

This lemma tells us that when~\eqref{eq:algo_near_stationary} holds,
then~\eqref{eq:near_stat_1} is nearly satisfied, except that in the
argument of $\partial g$, $x$ is replaced by $\prox{\mu g}{x}$.  In
general, however, $\prox{\mu g}{x}$ might be arbitrarily far away from
$x$. We can remedy this issue by requiring $g$ to be Lipschitz
continuous also.
\begin{lemma}%
  \label{lem:g-Lipschitz}
  Let $g:\R^n \to \R$ be a $\rho$-weakly convex function
  that is $L_g$-Lipschitz continuous, and let $\mu \in (0,\rho^{-1})$.
  Then the Moreau envelope $g_{\mu}$ is Lipschitz continuous with
  \begin{equation}
    \label{eq:lipschitz}
    \Vert \nabla g_\mu(x) \Vert \le L_g
  \end{equation}
  and
  \begin{equation}
    \label{eq:dist_prox_bounded}
    \Vert x - \prox{\mu g}{x} \Vert \le \mu L_g, \quad \forall x\in\R^{n}.
  \end{equation}
\end{lemma}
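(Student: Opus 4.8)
The plan is to reduce both inequalities to a single bound on the gradient of the Moreau envelope, and then to control that gradient using the subdifferential inclusion from Lemma~\ref{lem:grad-moreau-in-partial-prox}. First I would observe that the two claimed bounds are in fact equivalent: by Lemma~\ref{lem:grad_smooth_is_prox} we have $\nabla g_\mu(x) = \mu^{-1}(x - \prox{\mu g}{x})$, so that $\Vert x - \prox{\mu g}{x}\Vert = \mu \Vert \nabla g_\mu(x)\Vert$. Hence it suffices to establish \eqref{eq:lipschitz}, and then \eqref{eq:dist_prox_bounded} follows immediately by multiplying through by $\mu$.

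To bound $\Vert \nabla g_\mu(x)\Vert$, I would invoke Lemma~\ref{lem:grad-moreau-in-partial-prox}, which gives the inclusion $\nabla g_\mu(x) \in \partial g(\prox{\mu g}{x})$. It therefore remains to show that every Fr\'echet subgradient of an $L_g$-Lipschitz function has norm at most $L_g$. This is the only substantive step, and it is a direct consequence of the defining inequality \eqref{eq:subgradient} together with the Lipschitz property.

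Concretely, fix a point $\bar y$ and a subgradient $v \in \partial g(\bar y)$, assuming without loss of generality that $v \neq 0$. Probing the definition \eqref{eq:subgradient} along the direction $v$, that is, setting $y = \bar y + t v/\Vert v\Vert$ for small $t > 0$, yields $g(y) - g(\bar y) \ge t\Vert v\Vert + o(t)$ since $\langle v, y - \bar y\rangle = t\Vert v\Vert$ and $\Vert y - \bar y\Vert = t$. On the other hand, Lipschitz continuity gives $g(y) - g(\bar y) \le L_g\, t$. Combining these, dividing by $t$, and letting $t \to 0^+$ produces $\Vert v\Vert \le L_g$. Applying this with $v = \nabla g_\mu(x) \in \partial g(\prox{\mu g}{x})$ gives \eqref{eq:lipschitz}.

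I do not anticipate a serious obstacle here: weak convexity plays no direct role beyond guaranteeing (via Lemmas~\ref{lem:grad_smooth_is_prox} and~\ref{lem:grad-moreau-in-partial-prox}) that $g_\mu$ is differentiable and that its gradient is a genuine subgradient of $g$ at the proximal point. The one point requiring mild care is that the bound $\Vert v\Vert \le L_g$ is being applied to the Fr\'echet (rather than convex) subdifferential; but since the probing argument uses only the first-order inequality \eqref{eq:subgradient}, which holds by definition for Fr\'echet subgradients, together with the \emph{global} Lipschitz bound, no convexity of $g$ is needed.
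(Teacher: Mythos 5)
Your proof is correct and follows essentially the same route as the paper: both use the inclusion $\nabla g_\mu(x) \in \partial g(\prox{\mu g}{x})$ from Lemma~\ref{lem:grad-moreau-in-partial-prox}, bound the subgradients of $g$ by $L_g$, and then obtain~\eqref{eq:dist_prox_bounded} from the identity $x - \prox{\mu g}{x} = \mu \nabla g_\mu(x)$. The only difference is that the paper cites the equivalence of Lipschitz continuity and bounded subgradients from the variational-analysis literature, whereas you prove that implication directly with a correct probing argument along the direction of $v$.
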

\begin{proof}
  Lipschitz continuity is equivalent to bounded
  subgradients~\cite{mordukhovich-variational-analysis}, so
  by~\eqref{eq:2}, we have for all $x\in\R^{n}$
  \begin{equation}
    \Vert \nabla g_\mu(x) \Vert \le \sup \left\{\Vert v \Vert \, : \, v \in \partial g(\prox{\mu g}{x})\right\} \le L_g,
  \end{equation}
  proving~\eqref{eq:lipschitz}. The bound~\eqref{eq:dist_prox_bounded}
  follows immediately when we use the fact that
  $x - \prox{\mu g}{x} = \mu \nabla g_\mu(x)$ from
  Lemma~\ref{lem:grad_smooth_is_prox}.
\end{proof}

When $x\in \R^{n}$ satisfies~\eqref{eq:algo_near_stationary}, $\nabla h$ is
$L_{\nabla h}$-Lipschitz continuous, $g$ is $L_{g}$ Lipschitz
continuous, we have
\begin{alignat}{2}
  \nonumber
    \dist & (-\nabla h(\prox{\mu g}{x}), \partial g(\prox{\mu g}{x})) \;\; \\
  \nonumber
    &\le \Vert \nabla h(\prox{\mu g}{x}) - \nabla h(x) \Vert + \dist(-\nabla h(x), \partial g(\prox{\mu g}{x}))  \\
  \nonumber
    &\le L_{\nabla h}\Vert x- \prox{\mu g}{x} \Vert + \epsilon  \hspace{110pt} \mbox{(from~\eqref{eq:algo_near_stationary} and~\eqref{eq:2})}\\
    &\le L_{\nabla h}L_g\mu + \epsilon \hspace{162pt} \mbox{(from~\eqref{eq:dist_prox_bounded}).}
  \label{eq:prox_is_near_stationary}
\end{alignat}
Thus, if $\mu$ is sufficiently small and $x$
satisfies~\eqref{eq:algo_near_stationary}, then $\prox{\mu g}{x}$ is
near-stationary for~\eqref{eq:hg.add}.

\subsection{Stationarity for the Composite Problem}

It follows immediately from~\eqref{eq:2} in
Lemma~\ref{lem:grad-moreau-in-partial-prox} that for
$\mu \in (0,\rho^{-1})$, we have for all $x\in\R^d$
\begin{equation}
  \label{eq:prox_step_is_gradient_step_moreau_with_A}
  \nabla (g_\mu \circ A)(x) = A^*\nabla g_\mu(Ax) \in A^*\partial
  g(\prox{\mu g}{Ax}).
\end{equation}
Extending the results of the previous section to the case of a general
linear operator $A$ in~\eqref{eq:hg} requires some work. Stationarity
for~\eqref{eq:hg} requires that
$ 0 \in \nabla h(x) + A^*\partial g(Ax)$, so $\epsilon$-near
stationarity requires
\begin{equation} \label{eq:sj9}
  \dist(-\nabla h(x), A^*\partial g(Ax)) \le \epsilon.
\end{equation}
Our method can compute a point $x$ such that
\begin{equation}
  \left\Vert \nabla h(x) + \nabla(g_\mu \circ A)(x) \right\Vert \le \epsilon
\end{equation}
which by~\eqref{eq:prox_step_is_gradient_step_moreau_with_A} implies that
\begin{equation}
  \label{eq:new_stationarity}
  \dist( -\nabla h(x), A^*\partial g(z)) \le \epsilon, \quad
  \mbox{where} \;\; z=\prox{\mu g}{Ax},
\end{equation}
where, provided that $g$ is $L_g$-Lipschitz continuous, we have
\begin{equation} \label{eq:new_stationarity_2}
  \Vert Ax - z \Vert \le  L_g\mu.
\end{equation}
The bound in~\eqref{eq:new_stationarity} measures the criticality,
while the bound in~\eqref{eq:new_stationarity_2} concerns
feasibility. The
bounds~\eqref{eq:new_stationarity},~\eqref{eq:new_stationarity_2}
are not a perfect match with~\eqref{eq:sj9}, since the
subdifferentials of $h$ and $g \circ A$ are evaluated at different
points.

\paragraph{Surjectivity of $\mathbf{A}$.}
When $A$ is surjective, we can perturb the $x$ that
satisfies~\eqref{eq:new_stationarity},~\eqref{eq:new_stationarity_2}
to a nearby point $x^*$ that satisfies a bound of the
form~\eqref{eq:sj9}. Since $z=\prox{\mu g}{Ax}$ is in the range of
$A$,
we can define
\begin{equation} \label{eq:sj1}
  x^* := \argmin_{x'\in \R^{d}} \{\Vert x - x' \Vert^2 \; :  \; Ax' = z\},
\end{equation}
which is given explicitly by
\begin{equation} \label{eq:sj2}
  x^{*} = x - A^{*} {\left( AA^{*} \right)}^{-1}(Ax-z) = x- A^\dagger (Ax-z)
\end{equation}
where $A^\dagger := A^{*}{(AA^{*})}^{-1}$ is the pseudoinverse of $A$.
The operator norm of the pseudoinverse is bounded by the inverse of
the smallest singular value $\sigma_{\min}(A)$ of $A$, so when $g$ is
$L_g$-Lipschitz continuous, we have from~\eqref{eq:new_stationarity_2}
that
\begin{equation}
  \label{eq:use-norm-pseudoinverse}
  \Vert x - x^* \Vert \le  {\sigma_{\min}(A)}^{-1} \|Ax-z\| \le {\sigma_{\min}(A)}^{-1} L_{g}\mu.
\end{equation}
The point $x^{*}$ is approximately stationary in the sense
of~\eqref{eq:sj9}, for $\mu$ sufficiently small, because
\begin{alignat}{2}
  \nonumber
  \dist & (-\nabla h(x^{*}), A^* \partial g(Ax^{*})) && \;\; \\
  \nonumber
  &\le \Vert \nabla h(x^{*}) - \nabla h(x) \Vert + \dist(-\nabla h(x),
  A^* \partial g(z)) && \;\; \mbox{(since $Ax^{*}=z$)} \\
  \nonumber
  &\le L_{\nabla h}\Vert x- x^{*} \Vert + \epsilon && \;\; \mbox{(from~\eqref{eq:new_stationarity})} \\
  &\le L_{\nabla h}{\sigma_{\min}(A)}^{-1}L_g\mu + \epsilon && \;\; \mbox{(from~\eqref{eq:use-norm-pseudoinverse}).}
  \label{eq:xstar_is_near_stationary}
\end{alignat}

By choosing $\mu$ small, $x^{*}$ will be an approximate solution in
the stronger sense~\eqref{eq:sj9} and not just the weaker notion
of~\eqref{eq:new_stationarity},~\eqref{eq:new_stationarity_2}, which
is the case if $A$ is not surjective.

\section{Variable Smoothing}%
\label{sec:main}

We describe our variable smoothing approaches for the
problem~\eqref{eq:hg}, where we assume that $h$ is
$L_{\nabla h}$-smooth; $g$ is possibly nonsmooth, $\rho$-weakly convex,
and $L_g$-Lipschitz continuous; and $A$ is a nonzero linear continuous
operator. For convenience, we define the smoothed approximation
$F_{k}:\R^{d} \to \R$ based on the Moreau envelope with parameter $\mu_k$
as follows:
\begin{equation} \label{eq:Fk}
  F_k(x) := h(x) + g_{\mu_k} (Ax).
\end{equation}
We note from Lemma~\ref{lem:grad_smooth_is_prox} and the chain rule
that
\begin{equation} \label{eq:Fkd}
  \nabla F_k(x) = \nabla h(x) + \frac{1}{\mu_k} A^* (Ax-\prox{\mu_k g}{Ax}).
\end{equation}
The quantity $L_k$ defined by
\begin{equation}%
  \label{eq:Lk}
  L_k := L_{\nabla h} + \Vert A \Vert^2 \max\left\{\mu^{-1}_{k}, \frac{\rho}{1-\rho\mu_{k}} \right\}
\end{equation}
is a Lipschitz constant of the gradient of $\nabla F_k$; see
Lemma~\ref{lem:grad_smooth_is_prox}.  When $\rho \mu_k \le 1/2$, the
maximum in~\eqref{eq:Lk} is achieved by $\mu_k^{-1}$, so in this case
we can define
\begin{equation}%
  \label{eq:Lk2}
  L_k := L_{\nabla h} + \Vert A \Vert^2 / \mu_{k}.
\end{equation}

\subsection{An Elementary Approach}

Our first algorithm takes gradient descent steps on the smoothed
problem, that is,
\begin{equation} \label{eq:Fk.update}
  x_{k+1} = x_k - \gamma_k \nabla F_k(x_k),
\end{equation}
for certain values of the parameter $\mu_k$ and step size $\gamma_k$.
From~\eqref{eq:Fkd}, the formula~\eqref{eq:Fk.update} is equivalent to
\begin{equation}
  x_{k+1} = x_k - \frac{\gamma_k}{\mu_k}A^*(Ax_k - \prox{\mu_k g}{Ax_k}) -
  \gamma_k \nabla h(x_k).
\end{equation}
Our basic algorithm is described next.

\begin{algorithm}[ht!]
  \caption{Variable Smoothing}%
  \label{alg:variable_weakly_smoothing}
  \begin{algorithmic}
    \REQUIRE $x_1 \in \R^d$;
    \FOR{$k=1,2,3,\dotsc$}
    \STATE Set $\mu_k \leftarrow {(2\rho)}^{-1} k^{-1/3}$, define $L_k$ as
    in~\eqref{eq:Lk2}, set $\gamma_k \leftarrow 1/L_k$;
    \STATE Set
$x_{k+1} \leftarrow x_k - \gamma_k \nabla F_k(x_k)$;
    \ENDFOR
  \end{algorithmic}
\end{algorithm}

We now state the convergence result for
Algorithm~\ref{alg:variable_weakly_smoothing}. This result and later
results make use of a quantity
\begin{equation} \label{eq:def.f*}
  F^* := \liminf_{k \to \infty} F_k(x_k),
\end{equation}
which is finite if $F$ is bounded below (and possibly in other
circumstances too). (When $F^* = -\infty$, the claim of the theorem is
vacuously true.) We also make use of the following quantity:
\begin{equation} \label{eq:xjs}
  x^{*}_j:= x_j - A^{\dagger}(Ax_j-\prox{\mu_j g}{Ax_j}).
  \end{equation}

\begin{theorem}%
  \label{thm:variable_weakly_smoothing}
  Suppose that Algorithm~\ref{alg:variable_weakly_smoothing} is
  applied to the problem~\eqref{eq:hg}, where $g$ is $\rho$-weakly
  convex and $\nabla h$ and $g$ are Lipschitz continuous with
  constants $L_{\nabla h}$ and $L_g$, respectively. We have
  \begin{equation}
    \label{eq:thm1.1}
    \begin{aligned}
      \min_{1\le j \le k} \, \dist( -\nabla h(x_j), & A^*\partial g(\prox{\mu_j g}{Ax_j})) \\
      \le & k^{-1/3} \sqrt{L_{\nabla h} + 2\rho \Vert A \Vert^2}\sqrt{F_1(x_1) - F^* + {(2\rho)}^{-1} L_g^2},
    \end{aligned}
  \end{equation}
  where
  \begin{equation}
    \label{eq:thm1.2}
    \Vert Ax_j - \prox{\mu_j g}{Ax_j} \Vert \le j^{-1/3} {(2\rho)}^{-1} L_g,
  \end{equation}
  and $F^*$ is defined as in~\eqref{eq:def.f*}.  If $A$ is also
  surjective, then for $x_k^*$ defined as in~\eqref{eq:xjs},
  we have
  \begin{equation}
    \begin{aligned}
      \min_{1\le j \le k} \,& \dist  ( -\nabla h(x^{*}_j), A^*\partial g(Ax_j^{*})) \\
      \le & k^{-1/3} \bigg(\sqrt{L_{\nabla h} + 2\rho \Vert A \Vert^2}\sqrt{F_1(x_1) - F^* + {(2\rho)}^{-1} L_g^2} + L_{\nabla h} {\sigma_{\min}(A)}^{-1}L_{g}\bigg)
    \end{aligned}
  \end{equation}
  and $\Vert x_{j}-x_{j}^{*} \Vert \le {\sigma_{\min}(A)}^{-1}L_{g}\mu_{j} = {\sigma_{\min}(A)}^{-1}L_{g} {(2\rho)}^{-1} j^{-1/3}$.
\end{theorem}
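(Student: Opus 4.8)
The plan is to reduce all three displayed bounds to a single estimate on $\min_{1\le j\le k}\|\nabla F_j(x_j)\|$, obtained from a descent recursion for the smoothed objectives $F_j$. I would start with the two statements that need no iteration. The feasibility bound \eqref{eq:thm1.2} is nothing more than \eqref{eq:dist_prox_bounded} of Lemma~\ref{lem:g-Lipschitz}, applied at the point $Ax_j$ with parameter $\mu_j=(2\rho)^{-1}j^{-1/3}$. For the criticality measure, \eqref{eq:prox_step_is_gradient_step_moreau_with_A} shows that $A^*\nabla g_{\mu_j}(Ax_j)$ is an element of $A^*\partial g(\prox{\mu_j g}{Ax_j})$, while \eqref{eq:Fkd} gives $\nabla F_j(x_j)=\nabla h(x_j)+A^*\nabla g_{\mu_j}(Ax_j)$; hence
\[
  \dist\bigl(-\nabla h(x_j),\,A^*\partial g(\prox{\mu_j g}{Ax_j})\bigr)\le\|\nabla F_j(x_j)\|.
\]
So \eqref{eq:thm1.1} will follow once $\min_{1\le j\le k}\|\nabla F_j(x_j)\|$ is controlled.

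The core is a descent recursion that tracks the shrinking smoothing parameter. Because $\rho\mu_k=\tfrac12 k^{-1/3}\le\tfrac12$, the constant \eqref{eq:Lk2} is valid, $F_k$ is $L_k$-smooth, and the choice $\gamma_k=1/L_k$ gives the standard bound $F_k(x_{k+1})\le F_k(x_k)-\tfrac{1}{2L_k}\|\nabla F_k(x_k)\|^2$. The new iterate, however, must be compared with $F_{k+1}$, and the discrepancy is controlled by the monotonicity of the Moreau envelope in $\mu$: inserting $\prox{\mu_k g}{y}$ as a feasible point in the definition of $g_{\mu_{k+1}}(y)$ and using $\|y-\prox{\mu_k g}{y}\|\le\mu_k L_g$ from \eqref{eq:dist_prox_bounded} together with $\mu_k\le 2\mu_{k+1}$ yields $g_{\mu_{k+1}}(y)-g_{\mu_k}(y)\le L_g^2(\mu_k-\mu_{k+1})$. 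Combining these two inequalities at $y=Ax_{k+1}$ gives the recursion $F_{k+1}(x_{k+1})\le F_k(x_k)-\tfrac{1}{2L_k}\|\nabla F_k(x_k)\|^2+L_g^2(\mu_k-\mu_{k+1})$, whose last term telescopes.

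Summing from $j=1$ to $k$ and then letting $k\to\infty$, the telescoped smoothing term is bounded by $L_g^2\mu_1=(2\rho)^{-1}L_g^2$, and the definition \eqref{eq:def.f*} of $F^*$ as a $\liminf$ lets me pass to the limit and obtain $\sum_{j=1}^{\infty}\tfrac{1}{2L_j}\|\nabla F_j(x_j)\|^2\le F_1(x_1)-F^*+(2\rho)^{-1}L_g^2$. Since $L_j=L_{\nabla h}+2\rho\|A\|^2 j^{1/3}$ is increasing, I can lower-bound each weight by $1/(2L_k)$, so that $\tfrac{k}{2L_k}\min_{1\le j\le k}\|\nabla F_j(x_j)\|^2$ is bounded by the same right-hand side; estimating $L_k/k$ by $(L_{\nabla h}+2\rho\|A\|^2)k^{-2/3}$ and taking a square root produces the claimed $k^{-1/3}$ rate, and tracking these constants is where the precise form \eqref{eq:thm1.1} is pinned down.

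For the surjective case the per-index work is already done in Section~\ref{sub:stationarity}. With $z_j=\prox{\mu_j g}{Ax_j}$ and $x_j^*$ as in \eqref{eq:xjs}, the estimate \eqref{eq:use-norm-pseudoinverse} gives $\|x_j-x_j^*\|\le\sigma_{\min}(A)^{-1}L_g\mu_j=\sigma_{\min}(A)^{-1}L_g(2\rho)^{-1}j^{-1/3}$, which is the final claim verbatim, and the chain \eqref{eq:xstar_is_near_stationary} (using $Ax_j^*=z_j$ and the Lipschitz continuity of $\nabla h$) yields, for each $j$,
\[
  \dist\bigl(-\nabla h(x_j^*),\,A^*\partial g(Ax_j^*)\bigr)\le\|\nabla F_j(x_j)\|+L_{\nabla h}\,\sigma_{\min}(A)^{-1}L_g\mu_j.
\]
Minimizing over $j$ and combining the criticality bound for the first term with the $j^{-1/3}$ decay of $\mu_j$ for the second gives the surjective criticality bound. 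I expect the main obstacle to be twofold: quantifying the ``moving-target'' discrepancy $F_{k+1}-F_k$ through the parameter sensitivity of the Moreau envelope so that the telescoping closes, and ensuring in this last step that the index realizing the minimum makes \emph{both} the gradient-norm term and the $\mu_j$ term decay at the common rate $k^{-1/3}$.
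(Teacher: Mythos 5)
Your proposal is correct and follows essentially the same route as the paper's proof: the same descent inequality $F_k(x_{k+1})\le F_k(x_k)-\tfrac{1}{2L_k}\Vert\nabla F_k(x_k)\Vert^2$, the same Moreau-envelope comparison $g_{\mu_{k+1}}(y)\le g_{\mu_k}(y)+(\mu_k-\mu_{k+1})L_g^2$ (which is exactly Lemma~\ref{lem:relate_function_values} rederived inline, via the same insertion of $\prox{\mu_k g}{y}$ into the infimum), the same telescoping to $F^*$, and the same reduction of both distance bounds to $\Vert\nabla F_j(x_j)\Vert$ plus the pseudoinverse argument of~\eqref{eq:sj1}--\eqref{eq:xstar_is_near_stationary}. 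The only (harmless) deviation is that you lower-bound the accumulated weights by $\sum_{j\le k}(2L_j)^{-1}\ge k/(2L_k)$ instead of integrating $\sum_k k^{-1/3}$, which in fact yields a marginally better constant, and the ``which index attains the minimum'' issue you flag at the end is present in the paper's own argument as well (it is acknowledged there and resolved only by the epoch-wise Algorithm~\ref{alg:epoch_vs}).
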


Before proving this theorem, we state and prove a lemma that relates
the function values of two Moreau envelopes with two different
smoothing parameters. In the convex case, such statements are well
known, but in the nonconvex case this result is novel.
\begin{lemma}%
  \label{lem:relate_function_values}
  Let $g: \R^n \to \overline{\R}$ be a proper, closed, and $\rho$-weakly convex
  function, and let $\mu_2$ and $\mu_1$ be parameters such that $0 <
  \mu_2 \le \mu_1 < \rho^{-1}$. Then, we have
  \begin{equation}
    g_{\mu_2}(y) \le g_{\mu_1}(y) + \frac12 \frac{\mu_1 - \mu_2}{\mu_2}\mu_1\Vert \nabla g_{\mu_1}(y) \Vert^2.
  \end{equation}
  If, in addition, $g$ is $L_g$-Lipschitz continuous, we have
  \begin{equation}
    g_{\mu_2}(y) \le g_{\mu_1}(y) + \frac12 \frac{\mu_1 - \mu_2}{\mu_2}\mu_1 L_g^2.
  \end{equation}
\end{lemma}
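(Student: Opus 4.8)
The plan is to compare the two envelopes by exploiting the \emph{suboptimality} of the $\mu_1$-proximal point when it is used as a trial point in the $\mu_2$-subproblem. First I would set $p := \prox{\mu_1 g}{y}$, the unique minimizer in~\eqref{eq:moreau} for the parameter $\mu_1$ (well-defined because $\mu_1 < \rho^{-1}$, as noted after Definition~\ref{def:moreau_envelope}), so that
\[
  g_{\mu_1}(y) = g(p) + \frac{1}{2\mu_1}\Vert p - y \Vert^2.
\]
Since $g_{\mu_2}(y)$ is an infimum over all $z$, substituting the (generally non-optimal) choice $z = p$ gives the one-sided bound $g_{\mu_2}(y) \le g(p) + \frac{1}{2\mu_2}\Vert p - y \Vert^2$. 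Eliminating $g(p)$ between these two relations yields
\[
  g_{\mu_2}(y) \le g_{\mu_1}(y) + \frac{1}{2}\left(\frac{1}{\mu_2} - \frac{1}{\mu_1}\right)\Vert p - y \Vert^2,
\]
where the coefficient is nonnegative precisely because $\mu_2 \le \mu_1$.

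It then remains only to recast $\Vert p - y \Vert^2$ into the form in the statement. From Lemma~\ref{lem:grad_smooth_is_prox} we have $y - p = \mu_1 \nabla g_{\mu_1}(y)$, hence $\Vert p - y \Vert^2 = \mu_1^2 \Vert \nabla g_{\mu_1}(y) \Vert^2$. Inserting this and using $\frac{1}{\mu_2} - \frac{1}{\mu_1} = \frac{\mu_1 - \mu_2}{\mu_1 \mu_2}$ collapses the prefactor to $\frac{1}{2}\frac{\mu_1 - \mu_2}{\mu_2}\mu_1$, which is exactly the first claimed inequality. For the second inequality I would simply invoke the Lipschitz bound $\Vert \nabla g_{\mu_1}(y) \Vert \le L_g$ from Lemma~\ref{lem:g-Lipschitz}, replacing $\Vert \nabla g_{\mu_1}(y) \Vert^2$ by $L_g^2$ throughout.

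I do not anticipate a genuine obstacle: the argument is in essence a single optimality/suboptimality comparison combined with the gradient identity for the Moreau envelope. The only points deserving care are confirming that $p$ exists and is unique under $\mu_1 < \rho^{-1}$ (guaranteed by $\rho$-weak convexity) and verifying the sign of the coefficient $\frac{1}{\mu_2}-\frac{1}{\mu_1}$, which is exactly where the ordering hypothesis $\mu_2 \le \mu_1$ enters; without it the displayed inequality would reverse. It is perhaps worth remarking that the novelty in the weakly convex setting is not the mechanism of the proof but the validity of the underlying Moreau-envelope identities, which Lemmas~\ref{lem:grad_smooth_is_prox} and~\ref{lem:g-Lipschitz} supply for this class of functions.
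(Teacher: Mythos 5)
Your proof is correct and is essentially the paper's own argument: both compare the two envelopes by evaluating the $\mu_2$-subproblem at the suboptimal point $\prox{\mu_1 g}{y}$ and then rewrite $\Vert y - \prox{\mu_1 g}{y}\Vert^2$ via the gradient identity of Lemma~\ref{lem:grad_smooth_is_prox}, with the Lipschitz bound~\eqref{eq:lipschitz} giving the second claim. The only cosmetic difference is that the paper splits the quadratic term inside the minimization before bounding, whereas you eliminate $g(p)$ between the two relations afterward; the content is identical.
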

\begin{proof}
  By using the definition of the Moreau envelope, together with
  Lemma~\ref{lem:grad_smooth_is_prox}, we obtain
  \begin{equation}
    \begin{aligned}
      g_{\mu_2}(y) &= \min_{u\in\R^n} \, \left\{ g(u) + \frac{1}{2 \mu_2}\Vert y-u \Vert^2\right\} \\
                 &= \min_{u\in\R^n} \, \left\{ g(u) + \frac{1}{2 \mu_1}\Vert y-u \Vert^2 + \frac12\left(\frac{1}{\mu_2} - \frac{1}{\mu_1}\right) \Vert y-u \Vert^2\right\} \\
      &\le g(\prox{\mu_1 g}{y}) + \frac{1}{2 \mu_1}\Vert y-\prox{\mu_1 g}{y} \Vert^2 + \frac12\left(\frac{1}{\mu_2} - \frac{1}{\mu_1}\right) \Vert y-\prox{\mu_1 g}{y} \Vert^2 \\
                 &= g_{\mu_1}(y) + \frac12\left(\frac{\mu_1-\mu_2}{\mu_2}\right) \mu_1 \Vert \nabla g_{\mu_1} (y) \Vert^2,
    \end{aligned}
  \end{equation}
  proving the first claim. The second claim follows immediately
  from~\eqref{eq:lipschitz}.
\end{proof}

\begin{proof}[Proof of Theorem~\ref{thm:variable_weakly_smoothing}]
  Since $L_k = 1/\gamma_k$ is the Lipschitz constant of $\nabla F_k$,
  we have for any $k=1,2,\dotsc$ that
  \begin{equation}
    F_k(x_{k+1}) \le F_k(x_k) + \langle \nabla F_k(x_k), x_{k+1} - x_k\rangle + \frac{1}{2 \gamma_k} \lVert x_{k+1} - x_k \rVert^2.
  \end{equation}
  By substituting the definition of $x_{k+1}$
  from~\eqref{eq:Fk.update}, we have
  \begin{equation}
    \label{eq:same_k}
    F_k(x_{k+1}) \le F_k(x_k) - \frac{\gamma_k}{2} \Vert \nabla F_k(x_k) \Vert^2.
  \end{equation}
  From Lemma~\ref{lem:relate_function_values}, we have for all $x \in \R^{d}$
  \begin{equation}
    F_{k+1}(x)\le F_k(x) + \frac12(\mu_k - \mu_{k+1})\frac{\mu_k}{\mu_{k+1}}\Vert \nabla g_{\mu_k}(Ax)\Vert^2 \le F_k(x) + (\mu_k - \mu_{k+1}) L_g^2,
  \end{equation}
  where we used in the second inequality that
  $\frac{\mu_k}{\mu_{k+1}} \le 2$. We set $x=x_{k+1}$ and substitute
  into~\eqref{eq:same_k} to obtain
  \begin{equation}
    F_{k+1}(x_{k+1}) \le F_k(x_k) - \frac{\gamma_k}{2} \Vert \nabla F_k(x_k) \Vert^2 + (\mu_k - \mu_{k+1})L_g^2.
  \end{equation}
  By summing both sides of this expression over $k=1,2,\dotsc,K$, and
  telescoping, we deduce that
  \begin{align}
    \nonumber
    \sum_{k=1}^K \frac{\gamma_k}{2} \Vert \nabla F_k(x_k) \Vert^2 & \le F_1(x_1) - F_K(x_K) + (\mu_1 - \mu_K)L_g^2  \\
    \label{eq:telescoping}
    & \le F_1(x_1) - F^* + \mu_1L_g^2.
  \end{align}
  Since
  \[
  \gamma_k = \frac{\mu_k}{\mu_k L_{\nabla h} + \Vert A \Vert^2}\ge
  k^{-1/3}\frac{{(2\rho)}^{-1}}{{(2\rho)}^{-1}L_{\nabla h}+ \Vert A \Vert^2} =
  k^{-1/3}\frac{1}{L_{\nabla h}+ 2 \rho \Vert A \Vert^2}.
  \]
  we have from~\eqref{eq:telescoping} that
  \begin{equation}
    \label{eq:gradient_summable}
    \frac{1}{L_{\nabla h}+ 2 \rho \Vert A \Vert^2} \min_{1 \le j \le K} \,
    \Vert \nabla F_j(x_j) \Vert^2 \frac12 \sum_{k=1}^K
    k^{-1/3} \le F_1(x_1) - F^* + {(2\rho)}^{-1}L_g^2.
  \end{equation}
  Now we observe that
  \begin{equation}
    \label{eq:smoothing-choice1}
    \begin{aligned}
      \sum_{k=1}^{K} k^{-1/3} &\ge \sum_{k=1}^{K} \int_{k}^{k+1} x^{-1/3} \diff x = \int_{1}^{K+1} x^{-1/3} \diff x = \frac32 \left({(K+1)}^{2/3} -1\right) \\
      &\ge {(K+1)}^{2/3} - 1 \ge \frac12 K^{2/3}, \quad K=1,2,\dotsc,
    \end{aligned}
  \end{equation}
  where the final inequality can be checked numerically.  Therefore,
  by substituting into~\eqref{eq:gradient_summable}, we have
  \begin{equation}
    \label{eq:smoothing-choice2}
    \min_{1 \le j \le K} \, \Vert \nabla F_j(x_j) \Vert^2 \le  4\frac{L_{\nabla h}+ (2\rho)\Vert A \Vert^2}{K^{2/3}} \Big(F_1(x_1) - F^* + {(2\rho)}^{-1}L_g^2\Big),
  \end{equation}
  and so
  \begin{equation}
    \label{eq:rate}
    \min_{1 \le j \le K} \, \Vert \nabla F_j(x_j) \Vert \le  \frac{C}{K^{1/3}},
  \end{equation}
  where $C:= 2\sqrt{L_{\nabla h}+ (2\rho)\Vert A \Vert^2} \sqrt{F_1(x_1) - F^* + {(2\rho)}^{-1} L_g^2}$.
  By combining this bound with~\eqref{eq:new_stationarity}, and
  defining $z_j := \prox{\mu_j g}{Ax_j}$, we obtain
  \begin{equation} \label{eq:sj4}
      \min_{1\le j \le k} \, \dist(-\nabla h(x_j), A^*\partial g(z_j) )
      \le \min_{1 \le j \le k} \, \Vert \nabla F_j(x_j) \Vert
      \le \frac{C}{k^{1/3}},
  \end{equation}
  where we deduce from~\eqref{eq:dist_prox_bounded} that
  \begin{equation}
    \label{eq:feasibility-rate}
    \Vert Ax_j - z_j \Vert \le \frac{{(2\rho)}^{-1}L_g}{j^{1/3}} ,
    \quad \mbox{for all $j \ge 1$}.
  \end{equation}
  The second statement concerning surjectivity of $A$ follows from the
  consideration made in~\eqref{eq:sj1}
  to~\eqref{eq:xstar_is_near_stationary}.
\end{proof}

  There is a mismatch between the two bounds in this theorem. The
  first bound (the criticality bound) indicates that during the first
  $k = O(\epsilon^{-3})$ iterations, we will encounter an iteration
  $j$ at which the first-order optimality condition is satisfied to
  within a tolerance of $\epsilon$. However, this bound could have
  been satisfied at an early iteration (that is, $j \ll
  \epsilon^{-3}$), for which value the second (feasiblity) bound, on
  $\Vert Ax_j - \prox{\mu_j g}{Ax_j} \Vert$, may not be particularly
  small. The next section describes an algorithm that remedies this
  defect.

\subsection{An Epoch-Wise Approach with Improved Convergence Guarantees}%
\label{sec:epoch}

 We describe a variant of
 Algorithm~\ref{alg:variable_weakly_smoothing} in which the steps are
 organized into a series of epochs, each of which is twice as long as
 the one before.  We show that there is some iteration $j =
 O(\epsilon^{-3})$ such that both $\Vert Ax_j - \prox{\mu_j g}{Ax_j}
 \Vert $ and $\dist( -\nabla h(x_j), A^*\partial g(\prox{\mu_j g}{Ax_j}))$
 are smaller than the given tolerance $\epsilon$.

\begin{algorithm}[ht!]
  \caption{Variable Smoothing with Epochs}%
  \label{alg:epoch_vs}%
  \begin{algorithmic}
    \REQUIRE $x_1 \in \R^d$ and tolerance $\epsilon>0$;
    \FOR{$l=0,1,\dotsc$}
    \STATE Set $S_l \leftarrow \infty$, Set $j_l \leftarrow 2^l$;
    \FOR{$k=2^l,2^l+1,\dotsc,2^{l+1}-1$}
    \STATE Set $\mu_k \leftarrow {(2\rho)}^{-1}  k^{-1/3}$, define
    $L_k$ as in~\eqref{eq:Lk2}, set $\gamma_k \leftarrow 1/L_k$;
    \STATE Set
    $x_{k+1} \leftarrow x_k - \gamma_k \nabla F_k(x_k)$;
    \IF{$ \Vert \nabla F_{k+1}(x_{k+1})\Vert \le S_l$}
    \STATE Set $S_l \leftarrow \Vert \nabla F_{k+1}(x_{k+1})\Vert$; Set $j_l \leftarrow k+1$;
    \IF{$S_l \le \epsilon$ and $\Vert Ax_{k+1} - \prox{\mu_{k+1} g}{Ax_{k+1}} \Vert  \le \epsilon$}
    \STATE STOP;
    \ENDIF
    \ENDIF
    \ENDFOR
    \ENDFOR
  \end{algorithmic}
\end{algorithm}

\begin{theorem}%
  \label{thm:epoch_var_smoothing}
  Consider solving~\eqref{eq:hg} using Algorithm~\ref{alg:epoch_vs},
  where $h$ and $g$ satisfy the assumptions of
  Theorem~\ref{thm:variable_weakly_smoothing} and $F^*$ defined
  in~\eqref{eq:def.f*} is finite. For a given tolerance $\epsilon>0$,
  Algorithm~\ref{alg:epoch_vs} generates an iterate $x_j$ for some
  $j = O(\epsilon^{-3})$ such that
  \begin{equation}
    \dist( -\nabla h(x_j), A^*\partial g(z_j)) \le \epsilon \quad \text{and} \quad \Vert Ax_j - z_j \Vert \le \epsilon,
  \end{equation}
  where $z_j = \prox{\mu_{j} g}{Ax_{j}}$.
\end{theorem}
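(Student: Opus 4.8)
The plan is to localize the global telescoping argument of the proof of Theorem~\ref{thm:variable_weakly_smoothing} to a single epoch, and then to synchronize the resulting criticality bound with the feasibility bound~\eqref{eq:feasibility-rate}, which holds automatically at every sufficiently late iteration. Fix an epoch $l$, comprising the iterations $k = 2^l, \ldots, 2^{l+1}-1$. Exactly as in~\eqref{eq:same_k} followed by the step that invokes Lemma~\ref{lem:relate_function_values}, I would first establish the per-step inequality
\[
F_{k+1}(x_{k+1}) \le F_k(x_k) - \frac{\gamma_k}{2}\Vert \nabla F_k(x_k)\Vert^2 + (\mu_k - \mu_{k+1})L_g^2,
\]
and then sum it over the epoch to obtain
\[
\sum_{k=2^l}^{2^{l+1}-1}\frac{\gamma_k}{2}\Vert\nabla F_k(x_k)\Vert^2 \le \underbrace{F_{2^l}(x_{2^l}) - F_{2^{l+1}}(x_{2^{l+1}}) + (\mu_{2^l}-\mu_{2^{l+1}})L_g^2}_{=:\,R_l}.
\]

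The key new observation, absent in Theorem~\ref{thm:variable_weakly_smoothing}, is that each $R_l$ is both nonnegative and summable. Nonnegativity is immediate, since the left-hand side above is a sum of nonnegative terms. Summability follows by telescoping the $R_l$ across epochs exactly as in~\eqref{eq:telescoping}, giving $\sum_{l\ge 0} R_l \le F_1(x_1) - F^* + \mu_1 L_g^2 =: B$, a finite constant (here the function-value differences telescope and the smoothing corrections sum to at most $\mu_1 L_g^2 = (2\rho)^{-1}L_g^2$). Consequently $R_l \le B$ for \emph{every} epoch $l$ individually, and this uniform decoupling is what makes a per-epoch analysis viable.

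From here the criticality decay follows by the manipulations of~\eqref{eq:gradient_summable}--\eqref{eq:rate}, but over a window of length $2^l$, using the step-size lower bound $\gamma_k \ge k^{-1/3}/(L_{\nabla h}+2\rho\Vert A\Vert^2) \ge 2^{-(l+1)/3}/(L_{\nabla h}+2\rho\Vert A\Vert^2)$ valid throughout the epoch. Since the epoch contains $2^l$ iterations, this yields $\min_{2^l \le k < 2^{l+1}}\Vert\nabla F_k(x_k)\Vert^2 \le C' 2^{-2l/3}R_l \le C'B\,2^{-2l/3}$ for a constant $C'$, i.e.\ a best-in-epoch gradient norm of order $2^{-l/3}$; the one-index shift between $\Vert\nabla F_k(x_k)\Vert$ and the monitored $\Vert\nabla F_{k+1}(x_{k+1})\Vert$ is harmless, since dropping one of $2^l$ comparable terms changes the bound only by a constant. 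Hence the monitored minimum satisfies $S_l \le \sqrt{C'B}\,2^{-l/3} \le \epsilon$ once $2^l \ge (C'B)^{3/2}\epsilon^{-3}$. On the feasibility side, $\Vert Ax_k - z_k\Vert \le (2\rho)^{-1}L_g k^{-1/3}$ is monotonically decreasing, so it drops below $\epsilon$ simultaneously at all $k \ge K_{\mathrm{feas}} := (2\rho)^{-3}L_g^3\epsilon^{-3} = O(\epsilon^{-3})$. Taking $l^*$ the smallest index with $2^{l^*} \ge \max\{(C'B)^{3/2}\epsilon^{-3}, K_{\mathrm{feas}}\} = O(\epsilon^{-3})$, every iterate of epoch $l^*$ already meets the feasibility condition, while $S_{l^*}$ must fall to $\le\epsilon$ at some iterate of that epoch; at the first such iterate the inner conditional of Algorithm~\ref{alg:epoch_vs} sees both tests pass and the method stops. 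The stopping iterate $x_j$ has index $j < 2^{l^*+1} = O(\epsilon^{-3})$ and satisfies $\Vert\nabla F_j(x_j)\Vert\le\epsilon$, which via~\eqref{eq:new_stationarity} gives $\dist(-\nabla h(x_j), A^*\partial g(z_j)) \le \epsilon$ together with $\Vert Ax_j - z_j\Vert \le \epsilon$.

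The main obstacle is securing the uniform bound $R_l \le B$. Because the objective $F_k$ being descended changes from step to step as $\mu_k$ shrinks, there is no a priori control on the per-epoch function decrease, and a single epoch could in principle absorb a large share of the total descent. It is only the combination of the nonnegativity of each $R_l$ (from the descent inequality) with the cross-epoch telescoping of the smoothing correction (Lemma~\ref{lem:relate_function_values}) that pins every $R_l$ beneath the same constant. This decoupling is precisely what lets a window of length merely $2^l$ certify a gradient norm of order $2^{-l/3}$, matching the $k^{-1/3}$ rate of the feasibility residual and thereby forcing both quantities below $\epsilon$ at a common iterate with index $O(\epsilon^{-3})$.
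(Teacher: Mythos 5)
Your proof is correct and follows essentially the same route as the paper's: both localize the telescoping descent inequality to a single epoch, bound the per-epoch sum by the same global constant $F_1(x_1)-F^*+(2\rho)^{-1}L_g^2$ (your $R_l\le B$ observation is exactly the paper's ``monotonicity and discarding nonnegative terms'' step, spelled out), lower-bound $\sum_{k=2^l}^{2^{l+1}-1}k^{-1/3}$ by an integral, and pair the resulting $2^{-l/3}$ criticality decay with the monotone feasibility bound to pick the terminating epoch. Your additional remarks on the one-index shift between $\nabla F_k(x_k)$ and the monitored $\nabla F_{k+1}(x_{k+1})$ are a harmless refinement the paper elides.
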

\begin{proof}
  As in~\eqref{eq:telescoping}, by using monotonicity of
  $\{ F_k(x_k) \}$ and discarding nonnegative terms, we have that
  \begin{equation}
    \sum_{k=2^l}^{2^{l+1}-1} \frac{\gamma_k}{2} \Vert \nabla F_k(x_k) \Vert^2 \le F_1(x_1) - F^* + {(2\rho)}^{-1}  L_g^2.
  \end{equation}
  With the same arguments as in the earlier proof, we obtain
  \begin{equation}
    \begin{aligned}
      \sum_{k=2^l}^{2^{l+1}-1} k^{-1/3} &\ge \sum_{k=2^l}^{2^{l+1}-1} \int_k^{k+1} x^{-1/3} \diff x = \int_{2^l}^{2^{l+1}} x^{-1/3} \diff x \\
      &= \frac32 \left( {(2^{l+1})}^{2/3} - {(2^{l})}^{2/3} \right)
      = \frac32 \left(2^{2/3}-1\right) {(2^l)}^{2/3} \ge \frac12 {(2^l)}^{2/3}.
    \end{aligned}
  \end{equation}
  Therefore, we have
  \begin{equation}
    \min_{2^l \le j \le 2^{l+1}-1} \, \Vert \nabla F_j(x_j) \Vert \le  \frac{C}{{(2^{l})}^{1/3}},
  \end{equation}
  with $C=2\sqrt{L_{\nabla h}+ 2\rho \Vert A \Vert^2}\sqrt{ F_1(x_1) - F^*
    + {(2\rho)}^{-1} L_g^2}$ as before. Noting that $z_j :=
  \prox{\mu_j g}{Ax_j}$, we have as in~\eqref{eq:sj4} that
  \begin{equation}
    \label{eq:iteration_with_l}
    \min_{2^l \le j \le 2^{l+1}-1} \, \dist( -\nabla h(x_j), A^*\partial g(z_j) ) \le \frac{C}{{(2^l)}^{1/3}},
  \end{equation}
  as previously. Further, we have for $2^l \le j \le 2^{l+1}-1$ that
  \begin{equation}
    \label{eq:feasibility_with_l}
    \Vert Ax_j - z_j \Vert \le L_g \mu  \le \frac{{(2\rho)}^{-1}L_g}{j^{1/3}} \le \frac{{(2\rho)}^{-1}L_g}{{(2^l)}^{1/3}}.
  \end{equation}
  From~\eqref{eq:iteration_with_l} and~\eqref{eq:feasibility_with_l}
  we deduce that Algorithm~\ref{alg:epoch_vs} must terminate before
  the end of epoch $l$, that is, before $2^{l+1}$ iterations have been
  completed, where $l$ is the first nonnegative integer such that
  \begin{equation}
    2^l \ge \max\{C^3, {(2\rho)}^{-3} L_g^3\}\epsilon^{-3}.
  \end{equation}
  Thus, termination occurs after at most
  $2 \max\{C^3, {(2\rho)}^{-3} L_g^3\}\epsilon^{-3}$ iterations.
\end{proof}

For the case of $A$ surjective, we have the following stronger result.

\begin{corollary}%
  \label{cor:}
  Suppose that the assumptions of
  Theorem~\ref{thm:epoch_var_smoothing} hold, that $A$ is also
  surjective, and that the condition $\Vert Ax_{k+1} - \prox{\mu_{k+1}
    g}{Ax_{k+1}} \Vert \le \epsilon$ in Algorithm~\ref{alg:epoch_vs}
  is replaced by $\Vert x_{k+1}-x_{k+1}^{*} \Vert \le \epsilon$,
  where $x_{k+1}^*$ is defined in~\eqref{eq:xjs}.  Then for
  some $j'=O(\epsilon^{-3})$, we have that
  \begin{equation}
    \dist \, ( -\nabla h(x^{*}_{j'}), A^*\partial g(Ax_{j'}^{*})) \le \epsilon
  \end{equation}
  and $\Vert x_{j'}-x_{j'}^{*} \Vert \le \epsilon$.
\end{corollary}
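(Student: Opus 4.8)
The plan is to mirror the proof of Theorem~\ref{thm:epoch_var_smoothing}, replacing the feasibility bound~\eqref{eq:feasibility_with_l} on $\Vert Ax_j - z_j\Vert$ by the surjectivity estimates~\eqref{eq:use-norm-pseudoinverse} and~\eqref{eq:xstar_is_near_stationary}, and measuring criticality at the feasible point $x_j^*$ of~\eqref{eq:xjs} rather than at $z_j$. First I would import, unchanged, the epoch estimate established there: the iterates are generated by the same update (only the stopping test differs), so over epoch $l$, namely $2^l \le j \le 2^{l+1}-1$, the index $j$ minimizing $\Vert \nabla F_j(x_j)\Vert$ satisfies
\[
  \Vert \nabla F_j(x_j)\Vert \le \frac{C}{{(2^l)}^{1/3}}, \qquad C = 2\sqrt{L_{\nabla h}+2\rho\Vert A \Vert^2}\sqrt{F_1(x_1)-F^*+{(2\rho)}^{-1}L_g^2}.
\]

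For that minimizing index I would then feed the criticality bound $\dist(-\nabla h(x_j), A^*\partial g(z_j)) \le \Vert \nabla F_j(x_j)\Vert$ from~\eqref{eq:new_stationarity} into the surjectivity estimate~\eqref{eq:xstar_is_near_stationary}, obtaining
\[
  \dist(-\nabla h(x_j^*), A^*\partial g(Ax_j^*)) \le \Vert \nabla F_j(x_j)\Vert + L_{\nabla h}{\sigma_{\min}(A)}^{-1}L_g\mu_j.
\]
Because $\mu_j = {(2\rho)}^{-1}j^{-1/3} \le {(2\rho)}^{-1}{(2^l)}^{-1/3}$ throughout the epoch, both summands decay like ${(2^l)}^{-1/3}$, so the right-hand side is at most $C'{(2^l)}^{-1/3}$ with $C' := C + L_{\nabla h}{\sigma_{\min}(A)}^{-1}L_g{(2\rho)}^{-1}$. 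In parallel, the perturbation is governed by~\eqref{eq:use-norm-pseudoinverse},
\[
  \Vert x_j - x_j^*\Vert \le {\sigma_{\min}(A)}^{-1}L_g\mu_j \le \frac{D}{{(2^l)}^{1/3}}, \qquad D := {\sigma_{\min}(A)}^{-1}L_g{(2\rho)}^{-1}.
\]

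To finish, I would observe that both displayed quantities fall below $\epsilon$ once $2^l \ge \max\{{C'}^3, D^3\}\epsilon^{-3}$. Letting $l$ be the first nonnegative integer meeting this threshold, the epoch minimizer $j'$ (with $2^l \le j' \le 2^{l+1}-1$) satisfies both $\dist(-\nabla h(x_{j'}^*), A^*\partial g(Ax_{j'}^*)) \le \epsilon$ and $\Vert x_{j'}-x_{j'}^*\Vert \le \epsilon$; since epochs double in length, $j' < 2^{l+1} = O(\epsilon^{-3})$, which is exactly the claim.

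I expect the only genuine obstacle to be the extra term $L_{\nabla h}{\sigma_{\min}(A)}^{-1}L_g\mu_j$ incurred when transferring criticality from $x_j$ (measured against $z_j$) to the feasible point $x_j^*$ (measured against $Ax_j^*$). An additive perturbation of this kind could in principle spoil the rate, but the coupling $\mu_j = \Theta(j^{-1/3})$ forces it to decay at precisely the order at which $\Vert \nabla F_j(x_j)\Vert$ decays along epoch minimizers, so it merges into the enlarged constant $C'$ and the $O(\epsilon^{-3})$ complexity is preserved. I would also note that the modified stopping test, which checks $S_l \le \epsilon$ together with $\Vert x_{k+1}-x_{k+1}^*\Vert \le \epsilon$, monitors exactly the two ingredients of these bounds, so the algorithm halts within the same iteration budget (the criticality at $x_{j'}^*$ being certified up to the harmless factor $1+L_{\nabla h}$).
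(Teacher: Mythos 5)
Your proposal is correct and follows essentially the same route as the paper: the paper's own (very terse) proof likewise combines the epoch-wise gradient bound from Theorem~\ref{thm:epoch_var_smoothing} with the surjectivity perturbation argument of~\eqref{eq:sj1}--\eqref{eq:xstar_is_near_stationary} and then picks the first epoch index $l$ with $2^l \gtrsim \max\{C^3, {\sigma_{\min}(A)}^{-3}L_g^3 (2\rho)^{-3}\}\epsilon^{-3}$. Your version is slightly more careful in absorbing the additive term $L_{\nabla h}{\sigma_{\min}(A)}^{-1}L_g\mu_j$ into an enlarged constant $C'$ before setting the threshold, which is a harmless (indeed tidier) bookkeeping refinement of the same argument.
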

\begin{proof}
  With the considerations made in the previous proof as well as the
  one made in~\eqref{eq:sj1} to~\eqref{eq:xstar_is_near_stationary},
  we can choose $l$ to be the smallest positive integer such that
  \begin{equation}
    2^{l+1} \ge 2\max\{C^{3}, {\sigma_{\min}(A)}^{-3}L_{g}^{3}{(2\rho)}^{-3}\}\epsilon^{-3}.
  \end{equation}
The claim then holds for some $j' \le 2^{l+1}$.
\end{proof}

Although Algorithm~\ref{alg:epoch_vs} seems more complicated than
Algorithm~\ref{alg:variable_weakly_smoothing}, the steps are the
same. The only difference is that for the second algorithm, we do not
search for the iterate that minimizes criticality across \emph{all}
iterations but only across at most the last $k/2$ iterations, where
$k$ is the total number of iterations.

\begin{remark}\label{rem:rate}
  For both versions of our proposed method we use an explicit choice of
  smoothing parameters, choosing $\mu_k$ to be a multiple of
  $\mathcal{O}(k^{-1/3})$. This specific dependence on $k$ achieves a balance
  between criticality and feasibility. As can be seen from~\eqref{eq:rate}
  (criticality measure) and~\eqref{eq:feasibility-rate} (feasibility measure)
  both measures decrease like $k^{-1/3}$. A slower decrease in $\mu_k$ would
  result in a faster decrease in the criticality measure but a slower decrease
  in the feasibility measure---and vice versa.
\end{remark}

\begin{remark}\label{rem:convex}
  Our technique does not adapt in an obvious way to the case
  in which $g$ is actually convex. Typically, we know in advance
  whether or not $h$ and $g$ in~\eqref{eq:hg} are convex, and if
  they are, we could choose one of the well established methods that
  make use of gradients, proximal operators, and possibly
  acceleration. See, for example the proximal accelerated gradient
  approach of~\cite{me-variable-smoothing}, which achieves a rate of
  $\mathcal{O}(k^{-1})$. A method in the spirit
  of~\cite{ghadimi_lan_optimal_rate}, which automatically adapts to
  convexity and simultaneously achieves the optimal rates for both
  nonconvex and convex problems would be desirable, but is outside
  the scope of this work.
\end{remark}

\section{Proximal Gradient}%
\label{sec:prox_grad}

Here we derive a complexity bound for the proximal gradient algorithm
applied to the more elementary problem~\eqref{eq:hg.add} studied in
Section~\ref{sub:stationarity}, that is,
\begin{equation} \label{eq:Fprox}
\min_{x\in\R^{d}} \, F(x) := h(x) + g(x),
\end{equation}
for $h:\R^{d}\to \R$ a $L_{\nabla h}$-smooth function and $g:\R^d \to
\overline{\R}$ a possibly nonsmooth, $\rho$-weakly convex
function. Such a bound has not been made explicit before, to the
authors' knowledge, though it is a fairly straightforward consequence
of existing results. The bound makes an interesting comparison with the
result in Section~\ref{sec:main}, where the nonsmoothness issue
becomes more complicated due to the composition with a linear
operator.  In this section, we assume that a closed-form proximal
operator is available for $g$, and we show that the complexity bound
of $\cO(\epsilon^{-2})$ is the same order as for gradient descent
applied to smooth nonconvex functions.

Standard proximal gradient applied to problem~\eqref{eq:Fprox},
for a given stepsize $\lambda \in (0,\min\{\rho^{-1}/2, L_{\nabla h}^{-1}\}]$
and initial point $x_1$, is as follows:
\begin{align}
   \label{eq:argmin_prox_grad}
  x_{k+1} & := \arg\min_{x\in\R^d} \left\{ g(x) + \langle \nabla h(x_k), x - x_k\rangle +\frac{1}{2 \lambda} \Vert x-x_k \Vert^2 \right\}, \\
  \nonumber
  & = \prox{\lambda g}{x_k - \lambda \nabla h(x_k)}, \quad k=1,2,\dotsc,
\end{align}
where the choice of $\lambda$ ensures that the function to be
minimized in~\eqref{eq:argmin_prox_grad} is
$(\lambda^{-1}-\rho)$-strongly convex, so that $x_{k+1}$ is uniquely defined.

We have the following convergence result.
\begin{theorem}\label{th:pg} Consider the algorithm defined
  by~\eqref{eq:argmin_prox_grad} applied to problem~\eqref{eq:Fprox}, where we
  assume that $g$ is proper, lower semicontinuous and $\rho$-weakly convex and that
  $\nabla h$ is Lipschitz continuous with constant $L_{\nabla h}$. Supposing that
  the stepsize $\lambda \in (0,\min\{\rho^{-1}/2, L_{\nabla h}^{-1}\}]$, we
  have for all $k \ge 1$ that
  \begin{equation} \label{eq:pg.result}
    \min_{2\le j\le k+1} \, \dist (0, \partial(h+g)(x_j)) \le  k^{-1/2} \sqrt{2(F(x_1)-F^*)} \;
    \frac{\lambda^{-1}+L_{\nabla h}}{\sqrt{\lambda^{-1}-\rho}},
  \end{equation}
  where $F^*$ is defined in~\eqref{eq:def.f*}.
\end{theorem}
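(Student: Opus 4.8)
The plan is to follow the same descent-and-telescope skeleton used in the proof of Theorem~\ref{thm:variable_weakly_smoothing}, but now exploiting the exact proximal step rather than a smoothed gradient step. First I would establish a per-iteration descent inequality for $F = h+g$. Because $x_{k+1}$ is the exact minimizer of the strongly convex model in~\eqref{eq:argmin_prox_grad}, the optimality condition yields $-\nabla h(x_k) - \lambda^{-1}(x_{k+1}-x_k) \in \partial g(x_{k+1})$; this is the key subgradient inclusion and is what lets me later bound the stationarity measure at $x_{k+1}$. Evaluating the $(\lambda^{-1}-\rho)$-strong convexity of the model at the points $x_{k+1}$ and $x_k$, combined with the descent lemma for the $L_{\nabla h}$-smooth $h$, should give a bound of the form
\begin{equation}
  F(x_{k+1}) \le F(x_k) - \tfrac12\left(\lambda^{-1} - \rho - L_{\nabla h}\right)\Vert x_{k+1}-x_k\Vert^2.
\end{equation}
Under the stepsize restriction $\lambda \le \min\{\rho^{-1}/2, L_{\nabla h}^{-1}\}$ one checks $\lambda^{-1}-\rho-L_{\nabla h} \ge \lambda^{-1} - \rho \ge \rho > 0$ (using $\lambda^{-1} \ge 2\rho$ and $\lambda^{-1}\ge L_{\nabla h}$), so the step sizes $\Vert x_{k+1}-x_k\Vert$ are controlled by the decrease in $F$.

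Next I would translate the inclusion into a stationarity bound. Since $-\nabla h(x_k)-\lambda^{-1}(x_{k+1}-x_k) \in \partial g(x_{k+1})$, adding $\nabla h(x_{k+1})$ gives an element of $\partial(h+g)(x_{k+1})$ (using that $h$ is smooth so its gradient adds to the subdifferential), namely $\nabla h(x_{k+1}) - \nabla h(x_k) - \lambda^{-1}(x_{k+1}-x_k)$. Bounding its norm with the triangle inequality and $L_{\nabla h}$-Lipschitz continuity of $\nabla h$ yields
\begin{equation}
  \dist(0,\partial(h+g)(x_{k+1})) \le (L_{\nabla h} + \lambda^{-1})\Vert x_{k+1}-x_k\Vert.
\end{equation}
Combining this with the descent bound, summing over $k=1,\dots,K$, telescoping $F(x_1)-F(x_{K+1}) \ge F(x_1)-F^*$, and then taking the minimum over iterations (so the single smallest $\Vert x_{j}-x_{j-1}\Vert^2$ is bounded by the average) produces the $k^{-1/2}$ rate with the stated constants.

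The main obstacle, as usual in this style of argument, is assembling the constants cleanly: one must carry the factor $(L_{\nabla h}+\lambda^{-1})$ from the stationarity bound together with the factor $(\lambda^{-1}-\rho)$ coming from the strong-convexity descent coefficient, so that after the sum-and-minimize step the bound reads $k^{-1/2}\sqrt{2(F(x_1)-F^*)}\,(\lambda^{-1}+L_{\nabla h})/\sqrt{\lambda^{-1}-\rho}$. The slight subtlety is ensuring the descent coefficient is exactly $\tfrac12(\lambda^{-1}-\rho)$ rather than $\tfrac12(\lambda^{-1}-\rho-L_{\nabla h})$ so that the $\sqrt{\lambda^{-1}-\rho}$ in the denominator matches; this is where one should be careful to use the smoothness of $h$ to absorb the $L_{\nabla h}$ term into the model descent rather than losing it, essentially re-deriving a standard proximal-gradient descent lemma for the weakly convex case. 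The remaining steps — the integral comparison that was used for $\sum k^{-1/3}$ is not even needed here, since the stepsize $\lambda$ is constant, making the telescoping sum purely arithmetic — are routine.
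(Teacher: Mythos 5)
Your plan is essentially the paper's proof: exact optimality of the strongly convex subproblem yields both the descent inequality with coefficient $\tfrac12(\lambda^{-1}-\rho)$ (after absorbing the smoothness of $h$ into the model's $\tfrac{1}{2\lambda}\Vert\cdot\Vert^2$ term via $\lambda\le L_{\nabla h}^{-1}$) and the subgradient inclusion $w_{k+1}=\lambda^{-1}(x_k-x_{k+1})+\nabla h(x_{k+1})-\nabla h(x_k)\in\partial(h+g)(x_{k+1})$, followed by summing, telescoping, and taking the minimum --- the same route and the same constants. One caution before you write this up: the intermediate chain $\lambda^{-1}-\rho-L_{\nabla h}\ge\lambda^{-1}-\rho\ge\rho$ is false (subtracting $L_{\nabla h}\ge 0$ cannot increase the quantity, and, e.g., $\rho=1$, $L_{\nabla h}=\lambda^{-1}=10$ gives $\lambda^{-1}-\rho-L_{\nabla h}=-1$), so the naive coefficient $\tfrac12(\lambda^{-1}-\rho-L_{\nabla h})$ can genuinely be negative under the stated stepsize condition; your closing paragraph correctly identifies the fix --- use the model's quadratic to dominate the descent-lemma error of $h$ rather than subtracting $L_{\nabla h}$ --- which is exactly what the paper does, so only that final version of the descent step should be retained.
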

\begin{proof}
  Note first that the result is vacuous if $F^*=-\infty$, so we assume
  henceforth that $F^*$ is finite.  We have for every $x \in \R^d$
  that
\begin{multline}
  g(x_{k+1}) + h(x_{k}) + \langle \nabla h(x_{k}), x_{k+1}-x_{k}\rangle + \frac{1}{2
    \lambda}\Vert x_{k+1}-x_{k} \Vert^2 + \frac12 (\lambda^{-1} - \rho)
  \Vert x - x_{k+1} \Vert^2\\
  \le g(x) + h(x_k) + \langle \nabla h(x_k), x-x_k\rangle + \frac{1}{2 \lambda} \Vert x - x_k \Vert^2.
\end{multline}
By applying the inequality
\begin{equation}
  h(x_{k+1}) \le h(x_{k}) + \langle \nabla h(x_{k}), x_{k+1}-x_{k}\rangle +
  \frac{1}{2 \lambda}\Vert x_{k+1}-x_{k} \Vert^2, \quad \mbox{for all
    $x \in \R^d$},
\end{equation}
obtained from the Lipschitz continuity of $\nabla h$ and the fact that
$\lambda \le L_{\nabla h}^{-1}$, we deduce that
\begin{equation}
  F(x_{k+1}) + \frac12 (\lambda^{-1} - \rho) \Vert x - x_{k+1} \Vert^2 \le g(x) + h(x_k) + \langle \nabla h(x_k), x-x_k\rangle + \frac{1}{2 \lambda} \Vert x - x_k \Vert^2,
\end{equation}
for every $x \in \R^d$.  By setting $x=x_k$, we obtain
\begin{equation}
  F(x_{k+1}) + \frac12 (\lambda^{-1} - \rho) \Vert x_k - x_{k+1} \Vert^2 \le F(x_k),
\end{equation}
which shows, together with the definition~\eqref{eq:def.f*}, that
\begin{equation} \label{eq:su8}
\sum_{k=1}^\infty \Vert x_k - x_{k+1} \Vert^2 \le \frac{2(F(x_1)-F^*)}{\lambda^{-1}-\rho}.
\end{equation}
From the optimality conditions for~\eqref{eq:argmin_prox_grad}, we
obtain
\begin{equation}
  0 \in \nabla h(x_k) + \partial g(x_{k+1}) + \lambda^{-1}(x_{k+1}-x_k)
\end{equation}
which also shows that
\begin{equation} \label{eq:su9}
  w_{k+1} := \frac{1}{\lambda} (x_k - x_{k+1}) + \nabla h(x_{k+1}) - \nabla h(x_k) \in \partial(h+g)(x_{k+1}),
\end{equation}
so that
\[
  \| w_{k+1} \|^2 \le {(\lambda^{-1}+L_{\nabla h})}^2 \| x_k-x_{k+1}\|^2.
\]
By combining this bound with~\eqref{eq:su8}, we obtain
\[
  \sum_{k=1}^{\infty} \|w_{k+1}\|^2 \le 2(F(x_1)-F^*) \frac{{(\lambda^{-1}+L_{\nabla h})}^2}{\lambda^{-1}-\rho}.
\]
from which it follows that
\[
  \min_{1\le j\le k} \| w_{j+1} \| \le \sqrt{2 (F(x_1)-F^*)} \; \frac{(\lambda^{-1}+L_{\nabla h})}{\sqrt{\lambda^{-1}-\rho}}.
\]
The result now follows from~\eqref{eq:su9}, when we note that
\[
  \min_{1\le j\le k} \, \dist (0, \partial(h+g)(x_{j+1})) \le \min_{1\le j\le k} \, \Vert w_{j+1} \Vert.
\]
\end{proof}

This theorem indicates that the proximal gradient algorithm requires
at most $\cO(\eps^{-2})$ to find an iterate with $\eps$-approximate
stationarity. This bound contrasts with the bound $\cO(\eps^{-3})$ of
Section~\ref{sec:main} for the case of general $A$. Moreover, the
$\cO(\eps^{-2})$ bound has the same order as the bound for gradient
descent applied to general smooth nonconvex optimization.



\section{Conclusions}%
\label{sec:}

We consider a standard problem formulation in which a linear
transformation of the input variables is composed with a nonsmooth
regularizer and added to a smooth function. In most works, the
regularizer is assumed to be convex, but we extend here to the case
in which it is only weakly convex.  This extension allows for
functions which introduce desired properties, such as sparsity,
without causing a bias. (Two examples from robust statistics are
minimax concave penalty (MCP) and smoothly clipped absolute
deviation (SCAD).) We propose a novel method based on the variable
smoothing framework and show a complexity of
$\mathcal{O}(\epsilon^{-3})$ to obtain an $\epsilon$-approximate
solution.  This iteration complexity falls strictly between the
iteration complexity of smooth (nonconvex) problems
($\mathcal{O}(\epsilon^{-2})$) and that of the black box subgradient
method for weakly convex function ($\mathcal{O}(\epsilon^{-4})$)
which assumes no knowledge of the structure of the nonsmoothness.

A performance comparison between our smoothed approach and the
black-box subgradient algorithm on an image denoising problem that
uses an MCP total variation regularizer is shown in
Figure~\ref{fig:images-used-denoising}.

\begin{figure}
  \centering
  \begin{subfigure}[b]{0.32\linewidth}
    \centering
    \includegraphics[width=\linewidth]{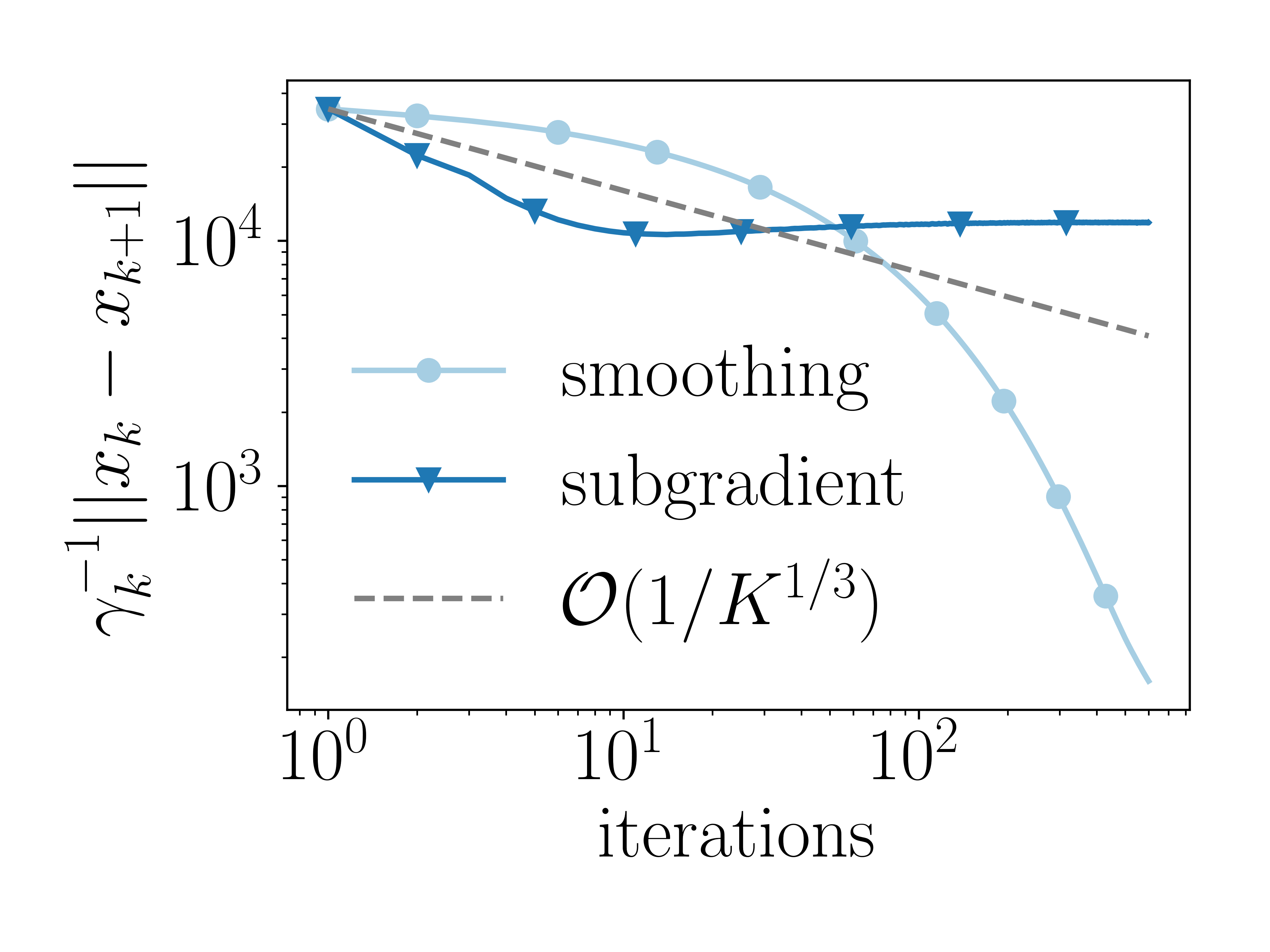}
    \caption{Norm of (sub)gradient.}
  \end{subfigure}
  \begin{subfigure}[b]{0.32\linewidth}
    \centering
    \includegraphics[width=\linewidth]{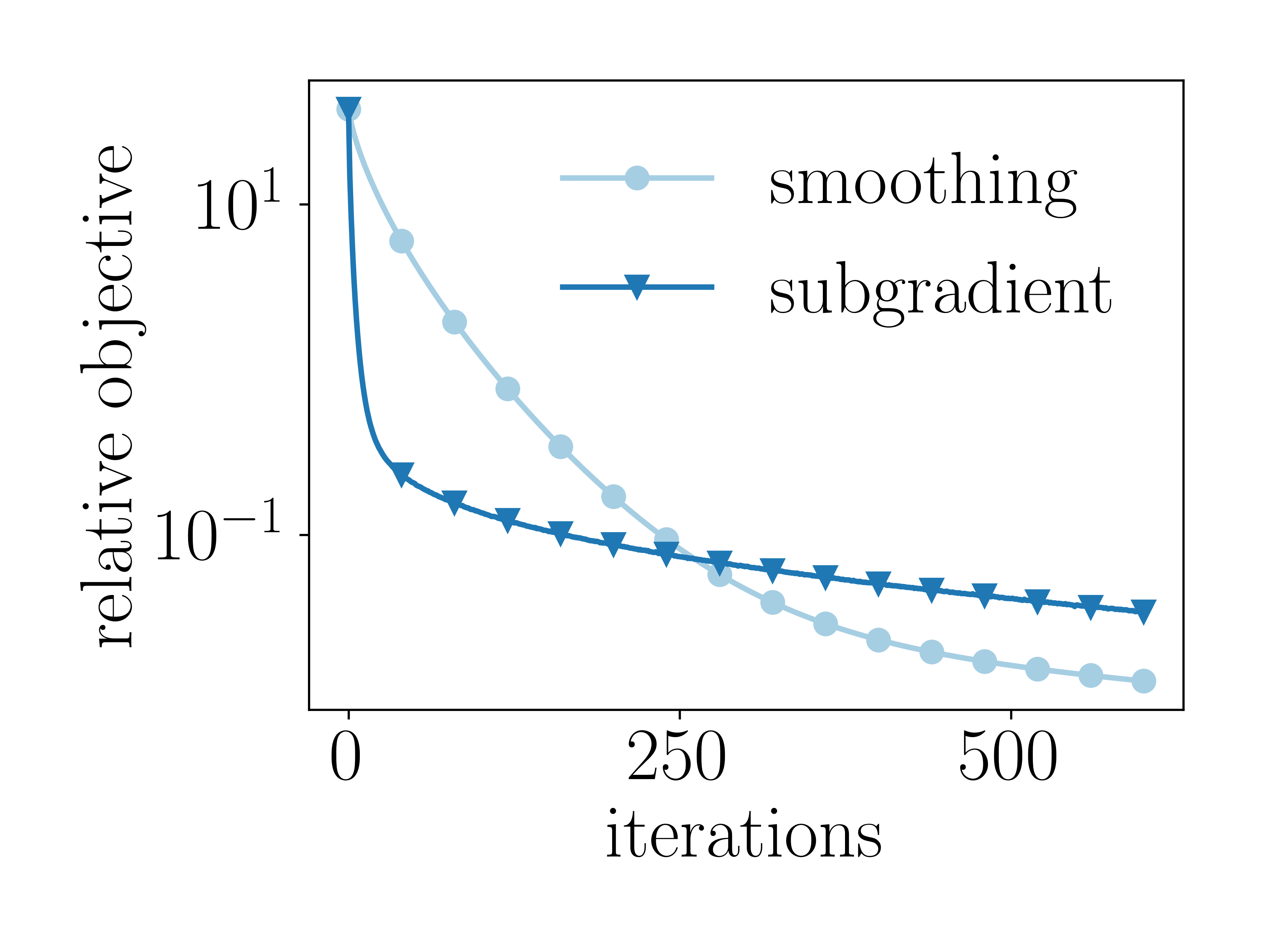}
    \caption{Objective function.}
  \end{subfigure}
  \begin{subfigure}[b]{0.32\linewidth}
    \centering
    \includegraphics[width=\linewidth]{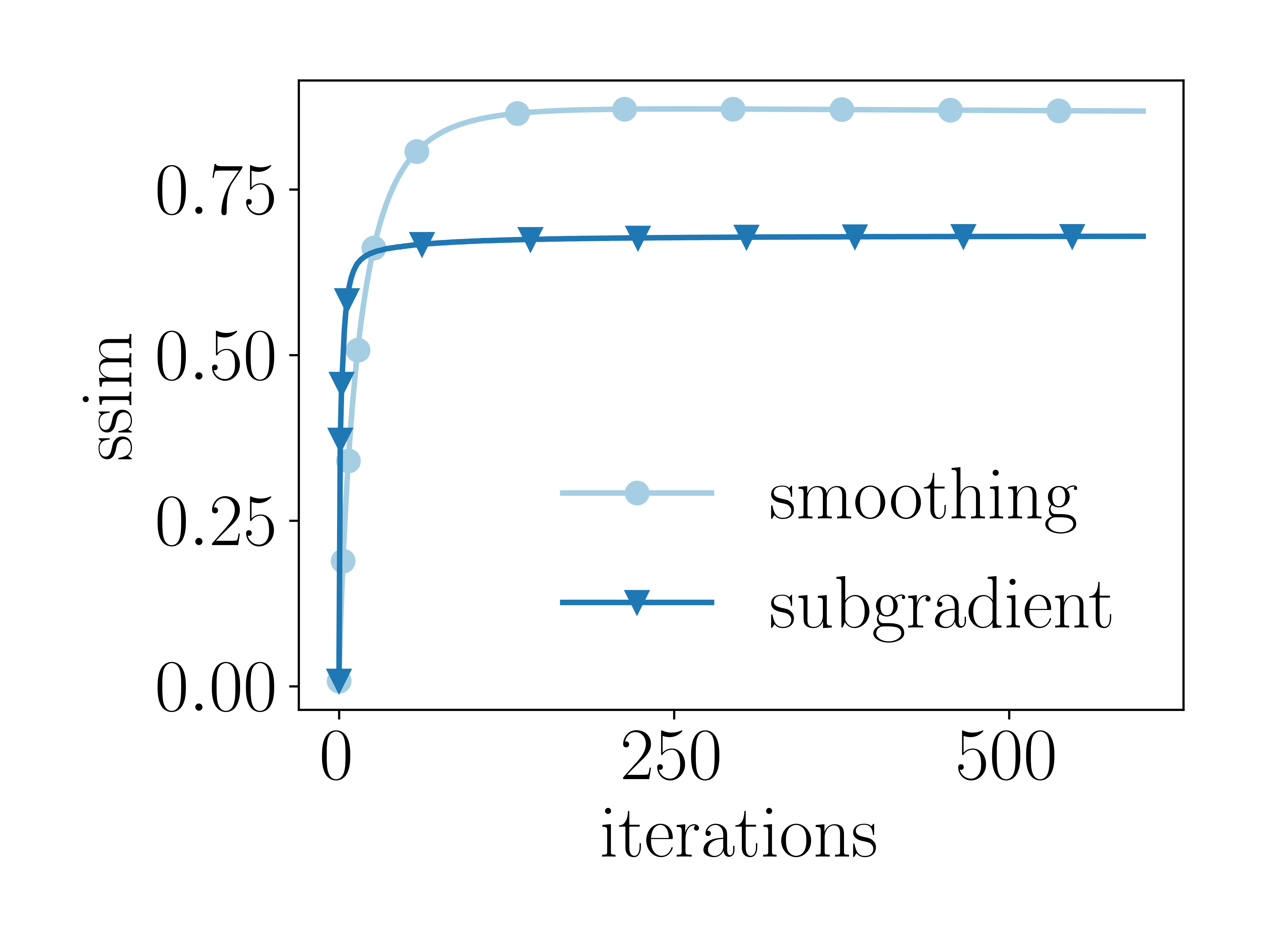}
    \caption{Reconstruction.}
  \end{subfigure}
  \caption{Progress of our smoothing algorithm and a naive subgradient algorithm on an image denoising problem, where minimax concave penalty is used instead of the $1$-norm in the anisotropic total variation, showing better performance by the smoothing approach.  \textbf{Left:} The difference of consecutive iterates scaled by the inverse of the stepsize, representing the norm of the (sub)gradient used at each iteration. \textbf{Middle:} Relative difference between the objective function at the current iterate and an approximate minimum. \textbf{Right:} The quality of the resulting reconstruction measured via the \emph{structural similarity index measure}, see~\cite{ssim}.}%
  \label{fig:images-used-denoising}
\end{figure}

\paragraph{Acknowledgements}
  Research of the first author was supported by the doctoral programme
  \textit{Vienna Graduate School on Computational Optimization (VGSCO)}, FWF
  (Austrian Science Fund), project W 1260. Research of the second author was
  supported by NSF Awards 1628384, 1634597, and 1740707; Subcontract 8F-30039
  from Argonne National Laboratory; and Award N660011824020 from the DARPA
  Lagrange Program.

\bibliographystyle{abbrv}
\bibliography{bibfile}

\end{document}